\def \F {{\mathcal F}}
\def \H {{\mathcal H}}
\def \N {{\mathbb N}}
\def \P {{\mathbb P}}
\def \R {{\mathbb R}}
\def \I {{\mathbf 1}}
\def \bF {{\mathbb F}}
\def \bH {{\mathbb H}}
\newtheorem{theorem}{Theorem}[section]
\newtheorem{lemma}[theorem]{Lemma}
\newtheorem{definition}[theorem]{Definition}
\newtheorem{remark}[theorem]{Remark}
\newtheorem{proposition}[theorem]{Proposition}
\newtheorem{example}[theorem]{Example}
\newtheorem{ass}[theorem]{Assumption}
\newcommand{\ud}{\mathrm d}
\newcommand{\esp}[2][\mathbb E] {#1\left[#2\right]}
\newcommand{\condespf}[2][\F_t]       {\mathbb E\left.\left[#2\right|#1\right]}
\newcommand{\condesph}[2][\H_t]       {\mathbb E\left.\left[#2\right|#1\right]}
\newcommand{\condesphh}[2][\H_{\tau-}]       {\mathbb E\left.\left[#2\right|#1\right]}
\newcommand{\condesphti}[2][\H_{t_i}]       {\mathbb E\left.\left[#2\right|#1\right]}
\newcommand{\varfti}[2][\F_{t_i}]       {\mathrm Var\left.\left[#2\right|#1\right]}
\newcommand{\covfti}[2][\F_{t_i}]       {\mathrm Cov\left.\left[#2\right|#1\right]}
\newcommand{\condespfti}[2][\F_{t_i}]       {\mathbb E\left.\left[#2\right|#1\right]}
\numberwithin{equation}{section}\makeatletter
\renewcommand{\subsection}{\@startsection
{subsection}{2}{0mm}{\baselineskip}{-0.25cm}
{\normalfont\normalsize\bf}} \makeatother
\author[C.~Ceci]{Claudia  Ceci}
\author[A.~Cretarola]{Alessandra Cretarola}
\author[F.~Russo]{Francesco Russo}
\begin{document}

\address{Claudia Ceci, Dipartimento di Economia,
Università degli Studi ``G. D'Annunzio'' di Chieti-Pescara, Viale Pindaro 42,
I-65127 Pescara, Italy.}
\email{c.ceci@unich.it}

\address{Alessandra Cretarola, Dipartimento di Matematica e Informatica, Università degli Studi di Perugia, via Vanvitelli, 1, I-06123 Perugia, Italy.} \email{alessandra.cretarola@dmi.unipg.it}

\address{Francesco Russo, Ecole Nationale 
Supérieure
 des Techniques Avancées,
ENSTA-ParisTech
Unité de Mathématiques appliquées, 828, boulevard des Maréchaux,
F-91120 Palaiseau
%and INRIA Rocquencourt \& Cermics Ecole des Ponts,
% Projet MathFi.
} 
\email{francesco.russo@ensta-paristech.fr}

\title[ BSDEs under partial information and financial applications]{BSDEs under partial information and financial applications}

\date{}

\begin{abstract}
\begin{center}
In this paper we provide existence and uniqueness results for the solution of BSDEs driven by a general square integrable
 martingale under partial information. We discuss some special cases where the solution to a BSDE under restricted information can be derived by that related to a  problem of a BSDE under full information. In particular, we provide  a suitable version of the F\"ollmer-Schweizer decomposition of a square integrable random variable working under partial information and we use this achievement to investigate the local risk-minimization approach for a semimartingale financial market model. %discuss the local risk-minimization approach in the case where there are restrictions on the available information, by means of backward stochastic differential equations (in short BSDEs).%
 \end{center}
\end{abstract}

\subjclass[2010]{60H10, 60H30, 91B28.}

\keywords{Backward stochastic differential equations, partial information, F\"ollmer-Schweizer decomposition, risk-minimization.
}

\maketitle

%\vspace{2mm}

\section{Introduction}

\noindent 
The goal of this paper is  to provide existence and uniqueness results for backward stochastic differential equations (in short BSDEs) driven by a general càdlàg square integrable martingale under partial information and to apply such results to provide a financial application. 

Frameworks affected by incomplete information
%The setting characterized by an incomplete information framework 
represent an interesting issue arising in many problems. Mathematically, this  means to consider an additional filtration $\bH=(\H_t)_{0 \leq t \leq T}$ smaller than the full information flow $\bF=(\F_t)_{0 \leq t \leq T}$, with $T$ denoting a finite time horizon. A typical example arises when ${\mathcal H}_t = {\mathcal F}_{(t-\tau)^+}$
where $\tau \in (0,T)$ is a fixed delay and $(t-\tau)^+:=\max\{0, t-\tau\}$ with $t \in [0,T]$, or in a financial market where the stock prices can only be observed at discrete time instants or their dynamics depends on an unobservable stochastic factor  and $\bH$ denotes the information available to investors (see for instance~\cite{c06},~\cite{ce12},~\cite{ce13},~\cite{f2000},~\cite{fs2012}).

For BSDEs driven by a general càdlàg martingale beyond the Brownian setting, there exist very few results in the literature (besides the pioneering work of~\cite{b93}, see~\cite{ms94},~\cite{ekh97} and more recently~\cite{ccr},~\cite{bdm02} and~\cite{cfs08}, as far as we are aware). In~\cite{ccr} the authors study for the first time such a general case when there are restrictions on the available information by focusing on BSDEs whose driver is equal to zero. Let $T \in (0,\infty)$ be  a fixed time horizon and $\xi$ a square-integrable  $\mathcal F_T$-measurable random variable which denotes the terminal condition. In this paper we consider general BSDEs of the form:
\begin{equation}\label{eq:bsde0}
Y_t=\xi + \int_t^T f(s,Y_{s-},Z_s)\ud \langle M\rangle_s - \int_t^T Z_s \ud M_s -(O_T - O_t), \quad 0 \leq t \leq T,
\end{equation}
driven by a square-integrable càdlàg $\bF$-martingale $M=(M_t)_{0 \leq t \leq T}$,  with $\bF$-predictable quadratic variation $ \langle M\rangle=(\langle M\rangle_t)_{0 \leq t \leq T}$,   
where $O= (O_t)_{0 \leq t \leq T}$  is a square-integrable $\bF$-martingale, satisfying a suitable orthogonality condition that we will make more precise in the next section. The driver of the equation is denoted by $f$ and for each $(y,z) \in \R \times \R$, the process $f(\cdot, \cdot, y, z)=(f(\cdot, t, y, z))_{0\leq t \leq T}$ is  $\bF$-predictable. 

We look for a solution $(Y,Z)$ to equation (\ref{eq:bsde0}) under partial information, where $Y = (Y_t)_{0 \leq t \leq T}$
is a  càdlàg $\bF$-adapted process such that  $\esp{\sup_{0 \leq t \leq T} |Y_t|^2}<\infty$ and $Z = (Z_t)_{0 \leq t \leq T}$ is an $\bH$-predictable process such that $\esp{\int_0^T|Z_s|^2\ud \langle M\rangle_s} < \infty$.  
 
Our first important achievement, stated in Theorem \ref{th:ex-uniq}, concerns existence and uniqueness properties of the solution to such BSDEs. We get such results by assuming $f$ uniformly Lipschitz with respect to $(y,z)$ and the behavior of
$\langle M\rangle$ to be controlled by a deterministic function. Moreover, we provide  in Proposition \ref{nuova} a  representation of the solution to BSDEs under restricted information in terms of the Radon-Nikod\'ym derivative of two $\bH$-predictable dual projections involving the solution of a problem under full information. Thanks to this result, in the particular case where the driver $f$ does not depend on $z$, we give in Proposition \ref{nuova1} an explicit characterization of the solution to BSDEs under restricted information in terms of the solution to the corresponding BSDEs under full information. Finally, as an illustrative example, we discuss the special case of delayed information, that is, when ${\mathcal H}_t = {\mathcal F}_{(t-\tau)^+}$ for each $t \in[0,T]$, with $\tau \in (0,T)$ being fixed, 
once we assume that $\langle M\rangle$ and $f(\cdot, \cdot, y,z)$  are $\bH$-predictable processes and $f$ does not depend on $y$. Proposition \ref{2nuova} ensures  existence of the solution to the BSDEs under restricted information  by a constructive procedure under weaker conditions on $f$ with respect to the general theorem. 

As a financial application we discuss the local risk-minimization approach for partially observable semimartingale models.
The local risk-minimization approach is a quadratic hedging method for contingent claims in incomplete markets which keeps the replication constraint and looks for a hedging strategy (in general not self-financing) with minimal cost, see e.g.~\cite{fs} and~\cite{s01} for a further discussion on this issue. The study of this approach under partial information in full generality is still an interesting topic to discuss.   
The first step was done by~\cite{fs}, where they complete the information starting from the reference filtration and recover the optimal strategy by means of predictable projections with respect to the enlarged filtration.
Some further contributions in this direction can be found in~\cite{s94} and~\cite{ccr} in the case where the underlying price process is a (local) martingale under the real-world probability measure. 
In~\cite{s94}, the author provides an explicit expression for risk-minimizing hedging strategies under restricted information in terms of predictable dual projections, whereas in~\cite{ccr}, by proving a version of the Galtchouk-Kunita-Watanabe decomposition that works under partial information, the authors extend the results of~\cite{fs86} to the partial information framework and show how their result fits in the approach of~\cite{s94}. Furthermore, an application of the local risk-minimization approach in the case of incomplete information to defaultable markets in the sense of~\cite{fs} can be found in~\cite{bc09}.

Here, we consider a more general situation since we allow the underlying price process to be represented by a semimartingale under the real-world probability measure. More precisely, in Proposition  \ref{Claudia} we provide a version of the F\"ollmer-Schweizer decomposition of a square-integrable random variable (that typically represents the payoff of a contract) with respect to the underlying price process, that works under partial information. 

Then, we study the relationship between the F\"ollmer-Schweizer decomposition of a contingent claim under partial information and the
existence of a locally risk-minimizing strategy according to the partial information framework. 

In addition, we discuss the case where the underlying price process can exhibit jumps in the classical full information setting. 

%It is known that under full information and in the case where the underlying price process $S$ has continuous trajectories, the (classical) locally risk-minimizing strategy can be computed via the Galtchouk-Kunita-Watanabe decompositon of the contingent claim under the minimal martingale measure for $S$, see e.g. Theorem 3.5 of~\cite{s01}.
%This is no longer true in general if $S$ has jumps. However, we can provide the optimal portfolio value in terms of the minimal martingale measure for $S$ even in this more complicated framework. 

The paper is organized as follows. In Section \ref{setting} we formulate the problem for BSDEs under partial information, we prove existence and uniqueness properties of solutions and we give the representation results in terms of $\bH$-predictable dual projections.  Section \ref{setting} concludes with a discussion of some special cases. Section \ref{LRM} is devoted to the study of local risk-minimization under partial information via BSDEs.
A discussion about the case of complete information in presence of jumps in the underlying price process can be found in Section \ref{sec:complete}. 
%A discussion about the case of observable stock prices can be found in Section \ref{FS}. 
Finally, some detailed definitions and technical results are gathered in Section \ref{sec:tech} in Appendix.

\section{Backward stochastic differential equations under partial information} \label{setting}

\noindent Let us fix a probability space $(\Omega,\F,\P)$ endowed with a filtration $\bF:= (\F_t)_{0 \leq t \leq T}$, where $\F_t$
represents the full information at time $t$ and $T$ denotes a fixed and finite time horizon. We assume that $\F=\F_T$. Then we consider a subfiltration $\bH := (\H_t)_{0 \leq t \leq T}$
of $\bF$, i.e. $\H_t \subseteq \F_t$, for each $t \in [0,T]$, corresponding to the available information level. We
remark that both filtrations are assumed to satisfy the usual hypotheses of completeness and right-continuity, see e.g.~\cite{pp}.\\
For simplicity we only consider the one-dimensional case. Extensions to several dimensions
are straightforward and left to the reader. The data of the problem are:
\begin{itemize}
\item an $\R$-valued square-integrable (càdlàg) $\bF$-martingale $M=(M_t)_{0 \leq t \leq T}$ with $\bF$-predictable
quadratic variation process denoted by $\langle M\rangle  = (\langle M,M\rangle)_{0 \leq t \leq T}$;
%\item a final time $T \in (0,\infty)$;
\item a terminal condition $\xi \in L^2(\Omega,\F_T,\P;\R)$\footnote{The space $L^2(\Omega,\F_T,\P;\R)$ denotes the set of all real-valued $\F_T$-measurable  random variables $H$ such that $\esp{|H|^2} = \int_\Omega |H|^2\ud \P < \infty$.};
\item a coefficient $f:\Omega \times [0,T] \times \R \times \R \longrightarrow \R$, such that, for each $(y,z) \in \R \times \R$, the process $f(\cdot, \cdot, y, z)=(f(\cdot, t, y, z))_{0\leq t \leq T}$
is 
$\bF$-predictable. The random function $f$ is said to be the {\em driver} of the equation.
\end{itemize}
We make the following assumptions on the coefficient $f$.

\newpage 
\begin{ass} \label{ass:driver}
\begin{enumerate}
\item[]
\item[(i)] $f$ is uniformly Lipschitz with respect to $(y,z)$: there exists a constant $K \ge 0$ such that for every $(y,z),(y',z') \in \R \times \R$,
$$
|f(\omega,t,y,z)-f(\omega,t,y',z')| \leq K \left(|y-y'|+|z-z'|\right), \quad (\P \otimes \langle M\rangle)-\mbox{a.e.\ on}\ \Omega \times [0,T];
$$
\item[(ii)] the following integrability condition is satisfied:
$$
\esp{\int_0^T|f(t,0,0)|^2\ud \langle M\rangle_t} < \infty.
$$
\end{enumerate}
\end{ass}

\noindent To describe the parameters and the solution of BSDEs, we introduce the following spaces:
\begin{itemize}
\item $\mathcal S_\F^2(0,T)$, the set of all càdlàg $\bF$-adapted processes $\phi=(\phi_t)_{0\leq t \leq T}$ such that $\|\phi\|_{\mathcal S^2}^2:=\esp{\sup_{0 \leq t \leq T} |\phi_t|^2}<\infty$;
\item $\mathcal M_{\H}^2(0,T)$ ($\mathcal M_{\F}^2(0,T))$, the set of all $\bH$-predictable (respectively  $\bF$-predictable) processes $\varphi=(\varphi_t)_{0\leq t \leq T}$ such that $\|\varphi\|_{\mathcal M^2}^2:=\esp{\int_0^T|\varphi_s|^2\ud \langle M\rangle_s} < \infty$;
\item $\mathcal L_\F^2(0,T)$, the set of all %square-integrable
$\bF$-martingales $\psi=(\psi_t)_{0\leq t \leq T}$ with $\psi_0 = 0$, such that $\|\psi\|_{\mathcal L^2}^2:=\esp{\langle \psi \rangle_T}=\esp{\psi_T^2}<\infty$.
\end{itemize}

We now give the definitions of solution in a full and in a partial information framework, respectively.

\begin{definition} \label{def1:solBSDE}
A solution of the BSDE
\begin{equation} \label{eq:bsdeF}
\tilde Y_t = \xi + \int_t^Tf(s,\tilde Y_{s-}, \tilde Z_s)\ud \langle M\rangle_s - \int_t^T \tilde Z_s \ud M_s - (\tilde O_T - \tilde O_t), \quad 0 \leq t \leq T,
\end{equation}
with data $(\xi,f)$ under complete information, is a triplet $(\tilde Y,\tilde Z, \tilde O)=(\tilde Y_t,\tilde Z_t, \tilde O_t)_{0 \leq t \leq T}$ of processes with values in $\R \times \R \times \R$ satisfying \eqref{eq:bsdeF}, such that
$$
(\tilde Y,\tilde Z, \tilde O) \in \mathcal S_\F^2(0,T) \times \mathcal M_\F^2(0,T) \times \mathcal L_\F^2(0,T),
$$
where $\tilde O$ is strongly orthogonal to $M$ (
i.e. $\langle \tilde O, M\rangle_t=0$ $\P$-a.s., for every $t \in [0,T]$).
\end{definition}

\begin{definition} \label{def:solBSDE}
A solution of the BSDE 
\begin{equation}\label{eq:bsde1}
Y_t=\xi + \int_t^T f(s,Y_{s-},Z_s)\ud \langle M\rangle_s - \int_t^T Z_s \ud M_s -(O_T - O_t), \quad 0 \leq t \leq T,
\end{equation}
with data $(\xi,f,\bH)$ under partial information, is a triplet $(Y,Z,O)=(Y_t,Z_t,O_t)_{0 \leq t \leq T}$ of processes with values in $\R \times \R \times \R$ satisfying \eqref{eq:bsde1}, such that
$$
(Y,Z,O) \in \mathcal S_\F^2(0,T) \times \mathcal M_{\H}^2(0,T) \times \mathcal L_\F^2(0,T),
$$
where $O$ satisfies
the orthogonality condition
\begin{equation} \label{eq:orthogcond}
\esp{O_T \int_0^T \varphi_t \ud M_t}=0,
\end{equation}
for all processes $\varphi \in \mathcal M_{\H}^2(0,T)$.
\end{definition}

\begin{remark}
Sometimes in the literature, only the couple $(Y,Z)$ identifies the solution of a BSDE of the form \eqref{eq:bsdeF} or  \eqref{eq:bsde1}. Indeed, this is reasonable since the $\bF$-martingale $O$ is uniquely determined by the processes $Y$ and $Z$ that satisfy the equation.
\end{remark}

\begin{remark} \label{R1}
The orthogonality condition given in \eqref{eq:orthogcond} is weaker than the classical strong orthogonality condition considered in Definition \ref{def1:solBSDE}.
 %see e.g~\cite{js} or~\cite{pp}%. 
 Indeed, set $N_t=\int_0^t \varphi_s\ud M_s$, for each $t \in [0,T]$, where $\varphi \in \mathcal M_{\H}^2(0,T)$. 
If $\psi \in \mathcal L_\F^2(0,T)$ is such that 
$$
\langle \psi,M\rangle_t=0\quad \P-\mbox{a.s.}, \quad \forall t \in [0,T],
$$
then
$$
\langle \psi,N\rangle_t=\int_0^t \varphi_s\ud \langle \psi,M\rangle_s=0\quad \P-\mbox{a.s.}, \quad \forall t \in [0,T].
$$  
Consequently, $\psi N$ is an $\bF$-martingale null at zero, that implies 
$$
\esp{\psi_t N_t}=0, \quad \forall t \in [0,T],
$$ 
and in particular condition \eqref{eq:orthogcond}.
\end{remark}

\begin{remark} \label{rem:orth}
Let $\psi \in \mathcal L_\F^2(0,T)$. Since for any $\bH$-predictable process $\varphi$, the process
$\I_{(0,t]}(s) \varphi_s$, with $t \leq T$, 
is $\bH$-predictable,  condition \eqref{eq:orthogcond} 
implies that for every $t \in [0,T]$ and for each $\varphi \in \mathcal M_{\H}^2(0,T)$, we have
$$
\esp{\psi_T \int_0^t \varphi_s \ud M_s}=0.
$$
Then, 
by conditioning with respect to $\F_t$ (note that $\psi$ is an $\bF$-martingale), for every $\varphi \in \mathcal M_{\H}^2(0,T)$, we get
$$
\esp{\psi_t \int_0^t \varphi_s \ud M_s}= \esp{\int_0^t \varphi_s \ud \langle M, \psi \rangle _s }= 0 \quad \forall t \in [0,T].
$$
From this last equality, we can argue that in the case of full information, i.e., when $\H_t=\F_t$, for each $t \in [0,T]$, condition \eqref{eq:orthogcond} is equivalent to the strong orthogonality condition between $\psi$ and $M$ (see e.g. Lemma 2 and Theorem 36, Chapter IV, page 180 of~\cite{pp} for a rigorous proof). 
\end{remark}

In the sequel, we will say that a square-integrable $\bF$-martingale $O$ is {\em weakly orthogonal} to $M$ if condition \eqref{eq:orthogcond} holds for all processes $\varphi \in \mathcal M_{\H}^2(0,T)$.
%call two square-integrable $\bF$-martingales $O$, $M$ {\em weakly orthogonal} 
%\section{Existence and uniqueness for nonlinear BSDEs under partial information} \label{sec:generalcase}

\subsection{Existence and Uniqueness}

Our aim is to investigate existence and uniqueness of solutions to the BSDE \eqref{eq:bsde1} with
data $(\xi, f, \bH)$ driven by the general martingale $M$ in the sense of Definition \ref{def:solBSDE}. The case $f\equiv 0$ in \eqref{eq:bsde1}, has been studied in~\cite{ccr},
where %An essential tool has been 
a key role is played by the Galtchouk-Kunita-Watanabe decomposition under partial information that we recall here for reader's convenience. 
\begin{proposition} \label{prop:GKW}
Let $\xi \in L^2(\Omega,\F_T,\P;\R)$. There exists a unique decomposition of the form
\begin{equation} \label{eq:GKW}
\xi = U_0 + \int_0^T H_t^\H\ud M_t + A_T, \quad \P-\mbox{a.s.},
\end{equation}
where $U_0 \in L^2(\Omega,\F_0,\P;\R)$, $H^\H=(H_t^\H)_{0 \leq t \leq T} \in \mathcal M_{\H}^2(0,T)$ and $A = (A_t)_{0 \leq t \leq T} \in  \mathcal L_\F^2(0,T)$ weakly orthogonal to $M$.  
\end{proposition}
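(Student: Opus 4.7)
My plan is to obtain the decomposition via an orthogonal projection in the Hilbert space $L^2(\Omega,\F_T,\P;\R)$. First I would introduce the subspace
$$
\mathcal K := \Bigl\{\, \int_0^T \varphi_s\,\ud M_s : \varphi \in \mathcal M_\H^2(0,T) \,\Bigr\}.
$$
Since $\bH \subseteq \bF$, every $\bH$-predictable $\varphi$ is $\bF$-predictable, so $\int_0^T \varphi_s\,\ud M_s$ is a well-defined terminal value of a square-integrable $\bF$-martingale. The Itô isometry
$$
\esp{\Bigl(\int_0^T \varphi_s\,\ud M_s\Bigr)^2} = \esp{\int_0^T \varphi_s^2\,\ud\langle M\rangle_s} = \|\varphi\|_{\mathcal M^2}^2
$$
then shows that $\varphi \mapsto \int_0^T \varphi_s\,\ud M_s$ is a linear isometry from the Hilbert space $\mathcal M_\H^2(0,T)$ (complete, as an $L^2$-space on the $\bH$-predictable $\sigma$-algebra under $\P \otimes \ud\langle M\rangle$) into $L^2(\F_T)$; hence $\mathcal K$ is closed.

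Next I would set $U_0 := \condespfo{\xi} \in L^2(\F_0)$, put $\tilde\xi := \xi - U_0$, and let $\pi_{\mathcal K}$ be the $L^2$-orthogonal projection onto $\mathcal K$. By the isometry above there is a unique $H^\H \in \mathcal M_\H^2(0,T)$ with $\pi_{\mathcal K}(\tilde\xi) = \int_0^T H_s^\H\,\ud M_s$, and I would define
$$
A_T := \tilde\xi - \int_0^T H_s^\H\,\ud M_s, \qquad A_t := \condespf{A_T}\ \text{ for } t \in [0,T].
$$
Then $A$ is an $\bF$-martingale in $\mathcal L_\F^2(0,T)$ (the condition $A_0 = 0$ follows from $\condespfo{\tilde\xi} = 0$ together with the martingale property of the $M$-integral), and the decomposition $\xi = U_0 + \int_0^T H_s^\H\,\ud M_s + A_T$ holds. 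The orthogonality $A_T \perp \mathcal K$ in $L^2$ is precisely the weak orthogonality condition \eqref{eq:orthogcond}.

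For uniqueness, given two triples I would subtract them and denote their differences by $\Delta U_0$, $\Delta H$, $\Delta A$, so that $0 = \Delta U_0 + \int_0^T \Delta H_s\,\ud M_s + \Delta A_T$. Conditioning on $\F_0$ gives $\Delta U_0 = 0$; testing the remaining identity against $\varphi = \Delta H \in \mathcal M_\H^2(0,T)$ and using the weak orthogonality of $\Delta A$ together with the Itô isometry yields $\esp{\int_0^T (\Delta H_s)^2\,\ud\langle M\rangle_s} = 0$, which forces $\Delta H = 0$ and then $\Delta A = 0$. The main obstacle I anticipate is simply the closedness of $\mathcal K$ established above; it is precisely because the projection target contains only $\bH$-predictable integrands that we cannot hope for strong orthogonality $\langle A, M\rangle \equiv 0$, and the weak orthogonality condition \eqref{eq:orthogcond} is the natural weakening tailored to the partial information setting.
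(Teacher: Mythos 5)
Your proposal is correct, and in fact the paper offers no proof of Proposition \ref{prop:GKW} at all: it is recalled from the earlier reference \cite{ccr}, where it is obtained by precisely this argument, namely the orthogonal projection of $\xi - \condespfo{\xi}$ in $L^2(\Omega,\F_T,\P)$ onto the closed subspace of terminal values of stochastic integrals with $\bH$-predictable integrands, with closedness coming from the It\^o isometry and completeness of $\mathcal M_\H^2(0,T)$. Your construction, verification of weak orthogonality, and uniqueness argument are all sound and match the standard route.
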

\noindent Inspired by~\cite{bdm02}, we make the following assumption on the $\bF$-predictable quadratic variation $\langle M\rangle$ of $M$.
\begin{ass} \label{ass:bracket}
There exists a deterministic function $\rho:\R_+ \rightarrow \R_+$ with $\rho(0^+)=0$ such that, $\P$-a.s.,
$$
\langle M\rangle_t - \langle M\rangle_s \leq \rho(t-s), \quad \forall 0 \leq s \leq t \leq T.
$$
\end{ass}

\begin{example}
On the probability space $(\Omega,\F,\P)$ let us consider a standard Brownian motion $W$ and an independent Poisson random measure $N(\ud \zeta, \ud t)$ on $ Z \times [0,T] $ with non-negative intensity $\nu(\ud \zeta)\ud t$, where $\nu(\ud \zeta)$ is a $\sigma$-finite measure on a measurable space $(Z, \mathcal Z)$. Denote by $\tilde N$ the corresponding compensated measure defined by
\begin{equation*} \label{def:cm}
\tilde N(\ud \zeta, \ud t)=N(\ud \zeta, \ud t)-\nu(\ud \zeta)\ud t.
\end{equation*}
\noindent 
Let $M$ be given by
\begin{equation*} \label{j-d}
M_t=M_0+  \int_0^t \bar \sigma_s \ud W_s+ \int_0^t \int_Z \bar K(\zeta;s)\tilde N(\ud \zeta, \ud s), \quad t \in [0,T],
\end{equation*}
with $\bar \sigma=(\bar \sigma_t)_{0 \leq t \leq T}$ and $\bar K=(\bar K(\cdot;t))_{0 \leq t \leq T}$ being $\R$-valued, $\bF$-adapted and  $\bF$-predictable  processes respectively, and satisfying 
$$
\esp{\int_0^T \bar \sigma^2_s \ud s +  \int_0^T \int_{Z}\bar K^2(\zeta;s)\nu(\ud\zeta) \ud s}<\infty .
$$
\noindent 
Then, $M$ is a square-integrable  $\bF$-martingale with $\bF$-predictable
quadratic variation process $\langle M\rangle$ given by 
$$
\langle M\rangle_t  =  \int_0^t \left (\bar \sigma^2_s+\int_{Z}\bar K^2(\zeta;s)\nu(\ud\zeta)\right) \ud s, \quad t \in [0,T].
 $$
If in addition we assume that there exists a positive constant $\bar C$ such that 
\begin{equation} \label{SI} \bar \sigma^2_t+\int_{Z}\bar K^2(\zeta;t)\nu(\ud\zeta)\leq \bar C \quad \ud \P \times \ud t-a.e. \end{equation}
then Assumption \ref{ass:bracket} is fulfilled with $ \rho(t-s) = \bar C(t-s)$, with $0 \leq s \leq t \leq T$.

\noindent  Let us observe that in particular condition (\ref{SI}) is satisfied if both processes $\bar \sigma$ and $\bar K$ are bounded and 
%$\nu(Z) < \infty$.
$\nu(\{\zeta \in Z:\bar K(\zeta;t) \neq 0\}) < \infty$ for every $t \in [0,T]$.
\end{example}

\noindent We start with the following lemma.
\begin{lemma} \label{lem:existence}
Let Assumption \ref{ass:driver} hold  and assume that $\langle M \rangle_T \leq C(T)$ $\P$-a.s., where $C(T)$ is a positive constant depending on $T$. Let $(U,V)=(U_t,V_t)_{0 \leq t \leq T} \in \mathcal S_\F^2(0,T) \times \mathcal M_{\H}^2(0,T)$. Then the BSDE
\begin{equation} \label{eq:bsdeUV}
Y_t=\xi + \int_t^T f(s,U_{s-},V_s)\ud \langle M\rangle_s - \int_t^T Z_s \ud M_s -(O_T - O_t), \quad 0 \leq t \leq T,
\end{equation}
has a solution with data $(\xi,f,\bH)$ under partial information in the sense of Definition \ref{def:solBSDE}.
\end{lemma}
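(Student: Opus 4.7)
The plan is to reduce \eqref{eq:bsdeUV} to a Galtchouk–Kunita–Watanabe decomposition problem by observing that, once $(U,V)$ is fixed, the driver $f(s,U_{s-},V_s)$ is a known $\bF$-predictable process. Concretely, I would introduce the modified terminal condition
\begin{equation*}
\xi' := \xi + \int_0^T f(s,U_{s-},V_s)\,\ud\langle M\rangle_s,
\end{equation*}
and then appeal to Proposition \ref{prop:GKW} applied to $\xi'$ (rather than to $\xi$) to produce the triple $(Z,O)$ of the candidate solution.

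The first step is to verify that $\xi'\in L^2(\Omega,\F_T,\P;\R)$. By Cauchy–Schwarz together with the boundedness hypothesis $\langle M\rangle_T\le C(T)$, one has
\begin{equation*}
\Bigl|\int_0^T f(s,U_{s-},V_s)\,\ud\langle M\rangle_s\Bigr|^2 \le C(T)\int_0^T |f(s,U_{s-},V_s)|^2\,\ud\langle M\rangle_s .
\end{equation*}
Using Assumption \ref{ass:driver} (Lipschitz in $(y,z)$ plus the integrability of $f(\cdot,0,0)$) and the fact that $U\in\mathcal S_\F^2$, $V\in\mathcal M_\H^2$, the right-hand side is integrable, so $\xi'\in L^2$ and therefore Proposition \ref{prop:GKW} produces $U_0\in L^2(\Omega,\F_0,\P)$, a process $Z\in\mathcal M_\H^2(0,T)$ and a martingale $O\in\mathcal L_\F^2(0,T)$ weakly orthogonal to $M$ such that
\begin{equation*}
\xi' = U_0 + \int_0^T Z_s\,\ud M_s + O_T .
\end{equation*}

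The next step is to reconstruct $Y$. I would set
\begin{equation*}
Y_t := U_0 + \int_0^t Z_s\,\ud M_s + O_t - \int_0^t f(s,U_{s-},V_s)\,\ud\langle M\rangle_s ,
\end{equation*}
so that $Y_T=\xi$ by construction, and a direct algebraic manipulation (subtracting the defining identities for $Y_t$ and $Y_T$) shows that \eqref{eq:bsdeUV} is satisfied. The càdlàg property of $Y$ is inherited from the two martingale terms and the continuity in $t$ of the $\ud\langle M\rangle$-integral.

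The last step is to check that $(Y,Z,O)$ lies in the correct function spaces. The memberships $Z\in\mathcal M_\H^2(0,T)$, $O\in\mathcal L_\F^2(0,T)$, and the weak orthogonality \eqref{eq:orthogcond} come for free from Proposition \ref{prop:GKW}. For $Y\in\mathcal S_\F^2(0,T)$ one bounds $|Y_t|^2$ by a constant times the sum of the square of the martingale part $\widetilde Y_t:=U_0+\int_0^t Z_s\,\ud M_s+O_t$ (controlled via Doob's $L^2$ inequality applied to $\widetilde Y$, since $\widetilde Y_T=\xi'\in L^2$) and the square of the finite-variation term $\int_0^t f(s,U_{s-},V_s)\,\ud\langle M\rangle_s$ (controlled by the same Cauchy–Schwarz argument as in step one). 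I expect the main technical obstacle to be simply this $L^2$-bookkeeping: confirming that the Cauchy–Schwarz estimate, which crucially uses $\langle M\rangle_T\le C(T)$, is enough to keep every term in the required space, and that this uniform bound on $\langle M\rangle$ is precisely what allows us to absorb the driver term without any smallness assumption on $T$.
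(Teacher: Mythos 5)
Your proposal is correct and follows essentially the same route as the paper: both apply the Galtchouk--Kunita--Watanabe decomposition under partial information (Proposition \ref{prop:GKW}) to the modified terminal value $\xi+\int_0^T f(s,U_{s-},V_s)\,\ud\langle M\rangle_s$, read off $Z$ and $O$ from that decomposition, and control $Y$ in $\mathcal S_\F^2(0,T)$ via Doob's inequality, Cauchy--Schwarz and the bound $\langle M\rangle_T\le C(T)$. The only cosmetic difference is that the paper defines $Y$ as a conditional expectation and then identifies it with the decomposition, while you build $Y$ directly from the decomposition; these yield the same process (and note only that the $\ud\langle M\rangle$-integral is càdlàg rather than continuous, which suffices).
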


\begin{proof}
Firstly, we set
\begin{equation*} \label{eq:defY}
Y_t=\condespf{\xi + \int_t^T f(s,U_{s-},V_s) \ud \langle M\rangle_s}, \quad  t \in [0,T].
\end{equation*}
Here $Y$ is a càdlàg $\bF$-adapted process and moreover
$$
|Y_t| \leq m_t:=\condespf{|\xi| + \int_0^T |f(s,U_{s-},V_s)| \ud \langle M\rangle_s}, \quad  t \in [0,T],
$$
where $m=(m_t)_{0 \leq t \leq T}$ is a square-integrable $\bF$-martingale. 
Thus, Doob's inequality and Jensen's inequality yield 
\begin{align*}
\esp{\sup_{0\leq t\leq T}|Y_t|^2} & \leq \esp{\sup_{0\leq t\leq T}|m_t|^2}	\\
& \leq 4 \sup_{0\leq t\leq T}\esp{|m_t|^2}=4\esp{\left(|\xi| + \int_0^T |f(s,U_{s-},V_s)| \ud \langle M\rangle_s\right)^2}\\
& \leq 8\esp{|\xi|^2} + 8\esp{\left(\int_0^T |f(s,U_{s-},V_s)| \ud \langle M\rangle_s\right)^2}.
\end{align*}
By Cauchy-Schwarz inequality and boundedness of $ \langle M\rangle$, we get
\begin{align*}
\esp{\sup_{0\leq t\leq T}|Y_t|^2} & \leq 8\esp{|\xi|^2} + 8 C(T)\esp{\int_0^T |f(s,U_{s-},V_s)|^2 \ud \langle M\rangle_s}.
\end{align*}
Finally, by Assumption \ref{ass:driver}, we have
\begin{align*}
\esp{\sup_{0\leq t\leq T}|Y_t|^2} & \leq 8\esp{|\xi|^2} + 32 C(T)K^2\esp{\int_0^T (|U_{s-}|^2 + |V_s|^2) \ud \langle M\rangle_s}\\
& \quad \quad + 16 C(T)\esp{\int_0^T |f(s,0,0)|^2 \ud \langle M\rangle_s}.
\end{align*}
The right-hand side of  previous inequality is finite in view of hypotheses on $(U,V)$, Assumptions \ref{ass:driver} and \ref{ass:bracket}. Hence, $Y \in  \mathcal S_\F^2(0,T)$.\\
\noindent By Proposition \ref{prop:GKW}, the square-integrable $\F_T$-measurable random variable 
$$
\xi + \int_0^T f(s,U_{s-},V_s) \ud \langle M\rangle_s
$$ 
admits a unique Galtchouk-Kunita-Watanabe decomposition under partial information. 
Setting $Z_t=H_t^\H$ and $O_t=A_t$ for every $t \in [0,T]$, see \eqref{eq:GKW}, this
ensures uniqueness of the process $Z \in \mathcal M_\H^2(0,T)$ and of the process $O \in \mathcal L_\F^2(0,T)$ satisfying the orthogonality condition \eqref{eq:orthogcond}, which verify the BSDE \eqref{eq:bsdeUV}.\\
Indeed, taking the conditional expectation with respect to $\F_t$ yields the following identity:
\begin{align}
\condespf{\xi + \int_0^T f(s,U_{s-},V_s) \ud \langle M\rangle_s} & = \condespf{U_0 + \int_0^T H_s^\H\ud M_s + A_T} \nonumber \\
& = U_0 + \int_0^t H_s^\H\ud M_s + A_t \nonumber \\
& = Y_0 + \int_0^t H_s^\H\ud M_s + A_t, \quad  0 \leq t \leq T. \label{eq:rvrepre}
\end{align}
By \eqref{eq:defY} and 
\eqref{eq:rvrepre} we have that
$$
Y_t + \int_0^t f(s,U_{s-},V_s) \ud \langle M\rangle_s = Y_0 + \int_0^t H_s^\H\ud M_s + A_t, \quad 0 \leq t \leq T,
$$
from which we deduce that
$$
Y_t = \xi  + \int_t^T f(s,U_{s-},V_s) \ud \langle M\rangle_s - \int_t^T H_s^\H\ud M_s -(A_T -  A_t), \quad 0 \leq t \leq T.
$$
\end{proof}

\noindent We keep on the study by giving an estimation result.
\begin{proposition} \label{prop:estimation}
Under %the hypotheses of Section \ref{setting}, 
Assumptions \ref{ass:driver} and \ref{ass:bracket}, let $(Y,Z,O)$ $($respectively $(Y^{'},Z^{'},O^{'}))$ be a solution of the BSDE \eqref{eq:bsde1} with data $(\xi, f, \bH)$ $($respectively with data $(\xi^{'}, f, \bH))$ associated to $(U,V) \in \mathcal S_\F^2(0,T) \times \mathcal M_{\H}^2(0,T)$ $($respectively $(U^{'},V^{'}) \in \mathcal S_\F^2(0,T) \times \mathcal M_{\H}^2(0,T))$. Then, 
for each $0 \leq u \leq v \leq T$, we have
\begin{equation} \label{eq:estimation}
\begin{split}
 & \esp{\sup_{u \leq t \leq v}|\delta Y_t|^2 + \int_u^v|\delta Z_s|^2 \ud \langle M\rangle_s + \langle \delta O \rangle_v - \langle \delta O \rangle_u}\\ 		
& \qquad \qquad \qquad \qquad \leq 42\esp{|\delta Y_v|^2} + C(v-u)\esp{\sup_{u \leq t \leq v}|\delta U_t|^2 + \int_u^v|\delta V_s|^2 \ud \langle M\rangle_s},
\end{split}
\end{equation}
where $C(r)=42K^2\max\{\rho^2(r), \rho(r)\}$ and $\delta Y$ stands for $Y-Y^{'}$ and so on.
\end{proposition}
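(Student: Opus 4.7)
The plan is to apply It\^o's formula to $|\delta Y|^2$ on the interval $[u,v]$ and then exploit the weak orthogonality of $\delta O$ to $M$ in order to kill the cross term between the $\delta Z \cdot M$ part and $\delta O$. I set $\delta Y = Y-Y'$, $\delta Z = Z-Z'$, $\delta O = O-O'$, $\delta \xi = \xi-\xi'$, and $\delta f_s := f(s,U_{s-},V_s) - f(s,U'_{s-},V'_s)$. Subtracting the two BSDEs \eqref{eq:bsde1} and evaluating between $t$ and $v$ yields, for each $t\in[u,v]$,
\[
\delta Y_t = \delta Y_v + \int_t^v \delta f_s\,\ud\langle M\rangle_s - \int_t^v \delta Z_s\,\ud M_s - (\delta O_v - \delta O_t).
\]

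First, I apply It\^o's formula to $|\delta Y|^2$ on $[u,v]$ and take expectation. The two stochastic integrals that arise, namely $\int_u^v 2\delta Y_{s-}\delta Z_s\,\ud M_s$ and $\int_u^v 2\delta Y_{s-}\,\ud(\delta O_s)$, are genuine $\bF$-martingales thanks to $\delta Y\in\mathcal S_\F^2$, $\delta Z\in\mathcal M_\H^2$, $\delta O\in\mathcal L_\F^2$ and Cauchy--Schwarz, so they vanish in expectation. I next replace $[M]$ and $[\delta O]$ by $\langle M\rangle$ and $\langle\delta O\rangle$, which is legitimate because the integrands $|\delta Z|^2$ and $1$ are predictable. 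The delicate point is the remaining cross term $2\esp{\int_u^v \delta Z_s\,\ud[M,\delta O]_s}$: since $\delta Z$ is $\bH$-predictable and $\delta O$ is weakly orthogonal to $M$, the argument of Remark \ref{rem:orth} yields $\esp{\int_u^v \delta Z_s\,\ud\langle M,\delta O\rangle_s}=0$, and hence the cross term vanishes. This produces the identity
\[
\esp{|\delta Y_u|^2 + \int_u^v |\delta Z_s|^2\,\ud\langle M\rangle_s + \langle\delta O\rangle_v - \langle\delta O\rangle_u} = \esp{|\delta Y_v|^2} + 2\,\esp{\int_u^v \delta Y_{s-}\delta f_s\,\ud\langle M\rangle_s}.
\]

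I then bound the Lipschitz term using $|\delta f_s|\le K(|\delta U_{s-}|+|\delta V_s|)$, Young's inequality $2ab\le \alpha a^2+\alpha^{-1}b^2$, Assumption \ref{ass:bracket} (so $\langle M\rangle_v-\langle M\rangle_u\le \rho(v-u)$) and the resulting $\int_u^v|\delta U_{s-}|^2\ud\langle M\rangle_s\le \rho(v-u)\sup_{u\le t\le v}|\delta U_t|^2$; this naturally produces contributions of the shape $\rho^2(v-u)\esp{\sup|\delta U|^2}$ and $\rho(v-u)\esp{\int|\delta V|^2\,\ud\langle M\rangle}$, accounting for the $\max\{\rho^2,\rho\}$ structure of $C$. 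Finally, to upgrade $\esp{|\delta Y_u|^2}$ into the stated $\sup$ estimate, I go back to the representation of $\delta Y_t$ in the first display, use $(a+b+c+d)^2\le 4(a^2+b^2+c^2+d^2)$, apply Cauchy--Schwarz to $\bigl(\int_u^v|\delta f_s|\,\ud\langle M\rangle_s\bigr)^2 \le \rho(v-u)\int_u^v |\delta f_s|^2\,\ud\langle M\rangle_s$, and use Doob's $L^2$ maximal inequality on the two martingales $t\mapsto \int_t^v \delta Z_s\,\ud M_s$ and $t\mapsto \delta O_v - \delta O_t$; combining with the identity of the previous step and bookkeeping the constants gives the announced bound with the factor $42$ in front of $\esp{|\delta Y_v|^2}$ and $C(v-u)=42K^2\max\{\rho^2(v-u),\rho(v-u)\}$. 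The only genuinely nontrivial point in the whole argument is the vanishing of the $\delta Z$--$\delta O$ cross term: under full information it would be immediate from strong orthogonality, whereas here it is available precisely because $\delta Z$ is $\bH$-predictable and the weak orthogonality \eqref{eq:orthogcond} is calibrated against exactly such integrands.
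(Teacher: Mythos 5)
Your identification of the key point is correct: the weak orthogonality \eqref{eq:orthogcond}, tested against the $\bH$-predictable integrand $\delta Z$, is exactly what decouples the $\delta Z\cdot M$ and $\delta O$ contributions, and your justification of the vanishing cross term via the argument of Remark \ref{rem:orth} is sound (note also that Assumption \ref{ass:bracket} forces $\langle M\rangle$ to be continuous, which keeps the It\^o computation clean). However, your route has a genuine gap in the closing step. After Young's inequality, the It\^o identity gives
\[
\esp{|\delta Y_u|^2+\int_u^v|\delta Z_s|^2\,\ud\langle M\rangle_s+\langle\delta O\rangle_v-\langle\delta O\rangle_u}\le\esp{|\delta Y_v|^2}+\alpha\,\rho(v-u)\,\esp{\sup_{u\le t\le v}|\delta Y_t|^2}+\alpha^{-1}\esp{\int_u^v|\delta f_s|^2\,\ud\langle M\rangle_s},
\]
and your $\sup$ estimate, obtained by Doob applied to $t\mapsto\int_t^v\delta Z_s\,\ud M_s$ and $t\mapsto\delta O_v-\delta O_t$, bounds $\esp{\sup|\delta Y_t|^2}$ by a constant ($\approx 40$) times $\esp{\int_u^v|\delta Z_s|^2\,\ud\langle M\rangle_s+\langle\delta O\rangle_v-\langle\delta O\rangle_u}$ plus data terms. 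Substituting one into the other leaves a term $\approx 40\,\alpha\,\rho(v-u)\,\esp{\sup|\delta Y_t|^2}$ on the right that must be absorbed into the left; with a fixed Young parameter this fails whenever $\rho(v-u)$ is not small, so the argument as written does not prove \eqref{eq:estimation} for arbitrary $0\le u\le v\le T$. It can be rescued by choosing $\alpha$ of order $1/\rho(v-u)$ (which, reassuringly, reproduces the $\max\{\rho^2,\rho\}$ shape of $C$), but even then the resulting constants are strictly larger than the stated $42$ and $42K^2\max\{\rho^2(r),\rho(r)\}$; ``bookkeeping the constants'' cannot land on those values, and they are not cosmetic here, since the weights $(5\cdot 42)^k$ in the norm $\|\cdot\|_p$ and the choice of $r_0$ with $42K^2\max\{\rho^2,\rho\}\le\frac16$ are calibrated to them.

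The paper avoids this feedback loop entirely, and it is worth seeing why. Because the driver is evaluated at the frozen pair $(U,V)$ rather than at $(Y,Z)$, conditioning the equation on $\F_t$ eliminates $\delta Z$ and $\delta O$ altogether and yields the pointwise bound $|\delta Y_t|\le\condespf{|\delta Y_v|+K\int_u^v(|\delta U_{s-}|+|\delta V_s|)\,\ud\langle M\rangle_s}=:\tilde m_t$, so Doob applied to the single martingale $\tilde m$ controls $\esp{\sup|\delta Y_t|^2}$ purely in terms of $\delta Y_v$ and the $(U,V)$ data. The $(\delta Z,\delta O)$ energy is then read off from the $L^2$ identity $\esp{|\int_u^v\delta Z_s\,\ud M_s+\delta O_v-\delta O_u|^2}=\esp{\int_u^v|\delta Z_s|^2\,\ud\langle M\rangle_s+\langle\delta O\rangle_v-\langle\delta O\rangle_u}$ (the same use of weak orthogonality you make, but at the level of the increment rather than inside It\^o's formula), after expressing $\int_u^v\delta Z_s\,\ud M_s+\delta O_v-\delta O_u$ from the equation and bounding it by $|\delta Y_v|+\sup_{u\le t\le v}|\delta Y_t|+K\int_u^v(|\delta U_{s-}|+|\delta V_s|)\,\ud\langle M\rangle_s$. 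No It\^o formula, no Young inequality, no absorption, and the constants $42$ and $C(r)$ drop out of Cauchy--Schwarz and Assumption \ref{ass:bracket} exactly as stated. If you want to keep your It\^o-based approach, you must make the $\rho$-dependent choice of the Young parameter explicit and accept (and propagate) larger constants in the subsequent fixed-point argument.
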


\begin{proof}
For reader's convenience, here we provide
briefly the proof of \eqref{eq:estimation}. It is formally analogous to the one of Proposition 7 in~\cite{bdm02}. The difference is due to the orthogonality condition we consider in this framework.
We start by the following equation: for every $t \in [0,v] \subseteq [0,T]$, set
\begin{equation} \label{eq:deltaY}
\delta Y_t = \delta Y_v + \int_t^v\left(f(s,U_{s-},V_s) - f(s,U_{s-}^{'},V_s^{'})\right)\ud \langle M\rangle_s - \int_t^v \delta Z_s \ud M_s -(\delta O_v - \delta O_t).%, \quad 0 \leq t \leq v.
\end{equation}
Since $f$ is $K$-Lipschitz in virtue of Assumption \ref{ass:driver},  for any $t \in [0,v]$ we have
$$
|\delta Y_t| \leq \condespf{|\delta Y_v| + K \int_t^v\left(|\delta U_{s-}| + |\delta V_s|\right) \ud \langle M \rangle_s} \leq \tilde m_t,
$$
where $\tilde m=(\tilde m_t)_{0 \leq t \leq T}$, defined by $\tilde m_t:= \condespf{|\delta Y_v| + K \int_u^v\left(|\delta U_{s-}| + |\delta V_s|\right) \ud \langle M \rangle_s}$ for each $t \in [0,T]$, is a square-integrable $\bF$-martingale. Doob's inequality % and Jensen's inequality 
gives
\begin{align}
\esp{\sup_{u\leq t\leq v}|\delta Y_t|^2} & \leq \esp{\sup_{u\leq t\leq v}|\tilde m_t|^2} \leq 4 \sup_{u\leq t\leq v}\esp{|\tilde m_t|^2}\nonumber \\
& \leq 4 \esp{\left(|\delta Y_v| + K \int_u^v\left(|\delta U_{s-}| + |\delta V_s|\right) \ud \langle M \rangle_s\right)^2}. \label{eq:Doob}
\end{align}
Furthermore, since $\delta O$ satisfies the orthogonality condition \eqref{eq:orthogcond}, it is easy to check that
$$
\esp{\left(\delta O_v - \delta O_u\right)\int_u^v \delta Z_s \ud M_s}=0, \quad 0 \leq u\leq v \leq T;
$$
then
\begin{equation} \label{eq:omorthog}
\esp{\int_u^u|\delta Z_s|^2 \ud \langle M\rangle_s + \langle \delta O\rangle_v - \langle \delta O\rangle_u}=\esp{\left|\int_u^v \delta Z_s \ud M_s + \delta O_v - \delta O_u\right|^2}.
\end{equation}
Hence, taking \eqref{eq:deltaY} into account we derive
$$
\int_u^v \delta Z_s \ud M_s + \delta O_v - \delta O_u = \delta Y_v - \delta Y_u + \int_u^v \left(f(s,U_{s-},V_s)-f(s,U_{s-}^{'},V_s^{'})\right)\ud \langle M\rangle_s.
$$
Using the fact that $f$ is $K$-Lipschitz in virtue of Assumption \ref{ass:driver}, we obtain
$$
\left|\int_u^v \delta Z_s \ud M_s + \delta O_v - \delta O_u\right| \leq |\delta Y_v| + \sup_{u \leq t \leq v}|\delta Y_t|+K \int_u^v\left(|\delta U_{s-}| + |\delta V_s|\right) \ud \langle M \rangle_s.
$$
Since \eqref{eq:Doob} also holds for $\esp{\left(\sup_{u\leq t\leq v}|\delta Y_t|\right)^2}$, from the estimate \eqref{eq:Doob} and relationship \eqref{eq:omorthog}, we get 
\begin{align*}
 & \esp{\sup_{u \leq t \leq v}|\delta Y_t|^2 + \int_u^v|\delta Z_s|^2 \ud \langle M\rangle_s + \langle \delta O \rangle_v - \langle \delta O \rangle_u}\\ 		
& \qquad \qquad \qquad \qquad \leq 14 \esp{\left(|\delta Y_v|+K \int_u^v\left(|\delta U_{s-}| + |\delta V_s|\right) \ud \langle M \rangle_s\right)^2}.
\end{align*}
Cauchy-Schwarz inequality together with Assumption \ref{ass:bracket} lead to the estimate
\begin{equation*}
\begin{split}
 & \esp{\sup_{u \leq t \leq v}|\delta Y_t|^2 + \int_u^v|\delta Z_s|^2 \ud \langle M\rangle_s + \langle \delta O \rangle_v - \langle \delta O \rangle_u}\\ 		
& \qquad \qquad \qquad \qquad \leq 42\esp{|\delta Y_v|^2} + C(v-u)\esp{\sup_{u \leq t \leq v}|\delta U_t|^2 + \int_u^v|\delta V_s|^2 \ud \langle M\rangle_s},
\end{split}
\end{equation*}
with $C(v-u)=42K^2\max\{\rho^2(v-u), \rho(v-u)\}$.
\end{proof}
\noindent Note that, since $\lim_{r \to 0^+}\rho(r)=0$ by Assumption \ref{ass:bracket}, there exists $r_0 \in (0,T)$ such that $42K^2\max\{\rho^2(v-u), \rho(v-u)\} \leq \frac{1}{6}$ as soon as $r \leq r_0$. Similarly to~\cite{bdm02}, we introduce the following norm on $\mathcal S_\F^2(0,T) \times \mathcal M_\H^2(0,T) \times \mathcal L_\F^2(0,T)$:
$$
\|(Y,Z,O)\|_p^2:=\sum_{k=0}^{\hat m-1}(5 \cdot 42)^k\esp{\sup_{I_k}|Y_t|^2 + \int_{I_k}|Z_s|^2 \ud \langle M\rangle_s + \langle O\rangle_{\frac{(k+1)T}{\hat m}} - \langle O\rangle_{\frac{kT}{\hat m}}},
$$
where $\hat m=[T/r_0]+1$ is fixed and $I_k=[kT/\hat m,(k+1)T/\hat m]$, for $0 \leq k \leq \hat m-1$, are $\hat m$ intervals that constitute a regular partition of $[0,T]$. 
This norm is equivalent to the classical one since we have
$$
\|(Y,Z,O)\|^2:=\|Y\|_{\mathcal S^2}^2 + \|Z\|_{\mathcal M^2}^2 + \|O\|_{\mathcal L^2}^2 \leq \|(Y,Z,O)\|_p^2 \leq \hat m(5 \cdot 42)^{\hat m-1}\|(Y,Z,O)\|^2.
$$
Thanks to the estimate of Proposition \ref{prop:estimation} and a straightforward computation, we can show that if $(Y,Z,O)$ and $(Y^{'},Z^{'},O^{'})$ are the solutions to the BSDE \eqref{eq:bsdeUV} with $(\xi,U,V)$ and $(\xi^{'},U^{'},V^{'})$ respectively, then we have
$$
\|(\delta Y,\delta Z,\delta O)\|_p^2 \leq \frac{1}{5} \|(\delta Y,\delta Z,\delta O)\|_p^2+ C(T/\hat m)\|(\delta U,\delta V,0)\|_p^2.
$$
Hence
\begin{equation} \label{eq:estimate}
\|(\delta Y,\delta Z,\delta O)\|_p^2 \leq \frac{1}{4} \|(\delta U,\delta V,0)\|_p^2.
\end{equation} 
\begin{theorem} \label{th:ex-uniq}
Let %the hypotheses of Section \ref{setting}, 
Assumptions \ref{ass:driver} and \ref{ass:bracket} hold.
Given data $(\xi, f, \bH)$, there exists a unique triplet $(Y,Z,O)$ which solves the
BSDE \eqref{eq:bsde1} under partial information 
in the sense of Definition \ref{def:solBSDE}.
\end{theorem}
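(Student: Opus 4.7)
The plan is to prove the theorem by a contraction mapping argument: apply Banach's fixed point theorem to the Picard map generated by the auxiliary BSDE \eqref{eq:bsdeUV} on the product space $\bigl(\mathcal{S}_\F^2(0,T)\times\mathcal{M}_\H^2(0,T)\times\mathcal{L}_\F^2(0,T), \|\cdot\|_p\bigr)$. Since $\|\cdot\|_p$ is equivalent to the usual norm $\|\cdot\|$, this space is complete, and the whole mechanism for setting up a strict contraction has already been developed just above the theorem statement.

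First I would define, for every $(U,V,W)$ in this space, the triplet $\Phi(U,V,W):=(Y,Z,O)$ to be the solution produced by Lemma \ref{lem:existence} applied to \eqref{eq:bsdeUV} with driver frozen at $(U,V)$ (the $W$-component plays no role). Note that Assumption \ref{ass:bracket} forces $\langle M\rangle_T\le\rho(T)<\infty$ $\P$-a.s., so the boundedness hypothesis of Lemma \ref{lem:existence} is satisfied with $C(T)=\rho(T)$. The uniqueness part of the Galtchouk-Kunita-Watanabe decomposition under partial information (Proposition \ref{prop:GKW}) guarantees that $Z$ and $O$ are unambiguously determined by $(U,V)$, so $\Phi$ is well-defined.

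Next I would invoke estimate \eqref{eq:estimate}, which gives, for any two inputs $(U,V,W)$ and $(U',V',W')$,
$$
\|(\delta Y,\delta Z,\delta O)\|_p^2 \le \tfrac{1}{4}\|(\delta U,\delta V,0)\|_p^2 \le \tfrac{1}{4}\|(\delta U,\delta V,\delta W)\|_p^2,
$$
so that $\Phi$ is a strict contraction of ratio at most $1/2$ in the $\|\cdot\|_p$-norm. Banach's fixed point theorem then delivers a unique fixed point $(Y,Z,O)$, which by the very definition of $\Phi$ solves \eqref{eq:bsde1} in the sense of Definition \ref{def:solBSDE}; conversely, any solution of \eqref{eq:bsde1} is a fixed point of $\Phi$ (take $U=Y$, $V=Z$), hence uniqueness follows.

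The main obstacle has in effect already been overcome in the construction of $\|\cdot\|_p$: on the whole interval $[0,T]$ the naive Picard map need not contract, because the Lipschitz loss $K^2\rho(T)$ may be arbitrarily large. The key idea, made possible by Assumption \ref{ass:bracket} and $\rho(0^+)=0$, is to pick a subdivision $(I_k)_{k=0}^{\hat m-1}$ of $[0,T]$ whose mesh is small enough that $42K^2\max\{\rho^2(T/\hat m),\rho(T/\hat m)\}\le 1/6$, and then to weight the interval-wise contributions by $(5\cdot 42)^k$ so that the short-horizon contractivity propagates across the full interval. Once \eqref{eq:estimate} is in hand, what remains is a direct application of Banach's principle, with no further analytic difficulty.
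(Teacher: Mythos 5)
Your proposal is correct and follows essentially the same route as the paper: the authors also define the Picard map $\Phi(U,V,L)=(Y,Z,O)$ via Lemma \ref{lem:existence}, observe it is well defined by the uniqueness of the Galtchouk--Kunita--Watanabe decomposition under partial information, and conclude from estimate \eqref{eq:estimate} that $\Phi$ is a $\tfrac12$-contraction for the equivalent norm $\|\cdot\|_p$, so Banach's fixed point theorem applies. Your added remarks on why $C(T)=\rho(T)$ suffices and why every solution is a fixed point only make explicit what the paper leaves implicit.
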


\begin{proof}
The idea is to use a fixed point argument. Let us consider the application $\Phi$ from $\mathcal S_\F^2(0,T) \times \mathcal M_\H^2(0,T) \times \mathcal L_\F^2(0,T)$ into itself which is defined by setting $\Phi(U,V,L)=(Y,Z,O)$ where $(Y,Z,O)$ is the solution to the BSDE \eqref{eq:bsdeUV}. Note that $L$ does not appear and the application is well-defined thanks to Lemma \ref{lem:existence} and since estimate \eqref{eq:estimate} ensures the existence of a unique solution, in the space $\mathcal S_\F^2(0,T) \times \mathcal M_\H^2(0,T) \times \mathcal L_\F^2(0,T)$, to the BSDE \eqref{eq:bsdeUV}, once the pair $(U,V) \in \mathcal S_\F^2(0,T) \times \mathcal M_\H^2(0,T)$ is fixed.\\
Indeed, the estimate \eqref{eq:estimate} says that $\Phi$ is a contraction with constant $\frac{1}{2}$ if we use the equivalent norm $\|\cdot\|_p$ instead of the the classical one $\|\cdot\|$ on the Banach space $\mathcal S_\F^2(0,T) \times \mathcal M_\H^2(0,T) \times \mathcal L_\F^2(0,T)$.
\end{proof}

Since it will be useful in the sequel, we recall for reader's convenience the definition of $\bH$-predictable dual projection.
\begin{definition}
Let $G=(G_t)_{0\leq t \leq T}$ be a càdlàg $\bF$-adapted process of integrable variation. The $\bH$-predictable dual projection of $G$ is the unique $\bH$-predictable process $G^\bH=(G_t^\bH)_{0\leq t \leq T}$ of integrable variation such that
$$
\esp{\int_0^T \varphi_s \ud G_t^\bH} = \esp{\int_0^T \varphi_s \ud G_t},
$$
for every $\bH$-predictable (bounded) process $\varphi$.
\end{definition}
It is possible to show that BSDEs under partial information can be reduced to full information problems, which however are not described by a BSDE, unless the driver does not depend on $z$ (see Proposition \ref {nuova1} below). 
More precisely, we have the following result.
\begin{proposition} \label{nuova}
Let $(\tilde Y,\tilde Z, \tilde O)\in \mathcal S_\F^2(0,T) \times \mathcal M_\F^2(0,T) \times \mathcal L_\F^2(0,T)$ be a solution to the problem under complete information
\begin{equation} \label{eq:full}
\tilde Y_t = \xi + \int_t^Tf(s,\tilde Y_{s-}, \hat Z_s)\ud \langle M\rangle_s - \int_t^T \tilde Z_s \ud M_s - (\tilde O_T - \tilde O_t), \quad 0 \leq t \leq T,
\end{equation}
where $\tilde O$ is strongly orthogonal to $M$ and 
$$ 
\hat Z_t=  \frac{\ud L^\bH_t}{\ud \langle M\rangle^\bH_t} \quad L_t:=\int_0^t \tilde Z_s \ud \langle M \rangle_s, \quad 0 \leq t \leq T.
$$
Then  the triplet
$$
(Y,Z,O)=\left(\tilde Y, \hat Z, \tilde O + B\right),
$$
where $B= \int( \tilde Z_s -  \hat Z_s)   \ud M_s $ is a square-integrable $\bF$-martingale weakly orthogonal to $M$, 
is a solution to the BSDE \eqref{eq:bsde1} under partial information.

\end{proposition}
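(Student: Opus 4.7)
The plan is to verify the three conditions in Definition \ref{def:solBSDE} for the triplet $(Y,Z,O)=(\tilde Y,\hat Z,\tilde O+B)$: (i) that the BSDE identity holds pointwise in $t$; (ii) that each component lies in the required space; and (iii) that $O$ is weakly orthogonal to $M$ in the sense of \eqref{eq:orthogcond}.

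For (i), I would simply substitute. Starting from \eqref{eq:full}, one has
$$
\tilde Y_t = \xi + \int_t^T f(s,\tilde Y_{s-},\hat Z_s)\ud\langle M\rangle_s - \int_t^T \tilde Z_s\ud M_s - (\tilde O_T-\tilde O_t).
$$
Now write $\int_t^T\tilde Z_s\ud M_s=\int_t^T\hat Z_s\ud M_s+(B_T-B_t)$ with $B=\int(\tilde Z-\hat Z)\ud M$; then the right-hand side rearranges into
$$
\xi + \int_t^T f(s,\tilde Y_{s-},\hat Z_s)\ud\langle M\rangle_s - \int_t^T \hat Z_s\ud M_s - \bigl((\tilde O_T+B_T)-(\tilde O_t+B_t)\bigr),
$$
which is exactly \eqref{eq:bsde1} for $(\tilde Y,\hat Z,\tilde O+B)$. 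So (i) is an algebraic check.

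For (ii), $\tilde Y\in\mathcal S^2_\F$ by hypothesis. The process $\hat Z$ is $\bH$-predictable by construction of the dual projection. Square-integrability follows from the defining identity of the $\bH$-predictable dual projection: for any bounded $\bH$-predictable $\varphi$,
$$
\esp{\int_0^T\varphi_s\hat Z_s\ud\langle M\rangle^\bH_s}=\esp{\int_0^T\varphi_s\ud L_s^\bH}=\esp{\int_0^T\varphi_s\tilde Z_s\ud\langle M\rangle_s}.
$$
Taking $\varphi=\hat Z$ (after a truncation/monotone convergence argument) and using Cauchy--Schwarz together with the fact that $\hat Z^2$ being $\bH$-predictable gives $\esp{\int_0^T\hat Z_s^2\ud\langle M\rangle_s}=\esp{\int_0^T\hat Z_s^2\ud\langle M\rangle_s^\bH}$, one obtains $\|\hat Z\|_{\mathcal M^2}\le\|\tilde Z\|_{\mathcal M^2}<\infty$. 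Hence $B\in\mathcal L^2_\F$ and therefore $O=\tilde O+B\in\mathcal L^2_\F$.

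The main obstacle is (iii), the weak orthogonality of $O$ to $M$. Fix $\varphi\in\mathcal M^2_\H$ and set $N=\int_0^\cdot\varphi_s\ud M_s$. For the $\tilde O$ piece, $\tilde O$ is strongly orthogonal to $M$, so Remark \ref{R1} yields $\esp{\tilde O_T N_T}=0$. For the $B$ piece, since $B$ is itself a stochastic integral with respect to $M$, the $\bF$-predictable bracket is
$$
\langle B,N\rangle_T=\int_0^T\varphi_s(\tilde Z_s-\hat Z_s)\ud\langle M\rangle_s,
$$
so $\esp{B_T N_T}=\esp{\int_0^T\varphi_s(\tilde Z_s-\hat Z_s)\ud\langle M\rangle_s}$. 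The product $\varphi_s\hat Z_s$ is $\bH$-predictable, hence by the defining property of the dual projections $L^\bH$ and $\langle M\rangle^\bH$ (applied to $\varphi\hat Z$ and to $\varphi$, respectively), both $\esp{\int_0^T\varphi_s\tilde Z_s\ud\langle M\rangle_s}$ and $\esp{\int_0^T\varphi_s\hat Z_s\ud\langle M\rangle_s}$ equal $\esp{\int_0^T\varphi_s\hat Z_s\ud\langle M\rangle_s^\bH}$. Thus $\esp{B_T N_T}=0$, giving $\esp{O_T N_T}=0$ and establishing \eqref{eq:orthogcond}. The delicate point, which deserves care, is precisely this interplay between $\langle M\rangle^\bH$ and the representation $\hat Z=\ud L^\bH/\ud\langle M\rangle^\bH$; once that identity is used correctly, the rest is bookkeeping.
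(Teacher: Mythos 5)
Your proposal is correct and follows essentially the same route as the paper: the algebraic split $\int\tilde Z\,\ud M=\int\hat Z\,\ud M+B$, the weak orthogonality of $B$ via the defining identities of the dual projections $L^\bH$ and $\langle M\rangle^\bH$ applied to $\bH$-predictable integrands, and the bound $\|\hat Z\|_{\mathcal M^2}\le\|\tilde Z\|_{\mathcal M^2}$ obtained by taking $\varphi=\hat Z$ and Cauchy--Schwarz. If anything, your ordering (establishing $\hat Z\in\mathcal M^2_\H$ before the orthogonality computation) and your explicit appeal to Remark \ref{R1} for the $\tilde O$ part are slightly more careful than the paper's write-up, which only adds the preliminary observation that $L^\bH\ll\langle M\rangle^\bH$ (via Proposition 4.8 of the cited reference) so that $\hat Z$ is well defined.
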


\begin{proof}
First let us observe that by Proposition 4.8 of~\cite{ccr}, $L^\bH:= (\int\tilde Z_s \ud \langle M \rangle_s)^\bH$ is   absolutely continuous with respect to $ \langle M\rangle^\bH$, hence  $\hat Z$ is well defined.  
By \eqref{eq:full} we get
$$
\tilde Y_t = \xi + \int_t^Tf(s,\tilde Y_{s-}, \hat Z_s)\ud \langle M\rangle_s - \int_t^T \hat Z_s \ud M_s - \int _t^T ( \tilde Z_s -  \hat Z_s)   \ud M_s  - (\tilde O_T - \tilde O_t), \quad 0 \leq t \leq T.
$$
Set $ B_ t = \int _0^t ( \tilde Z_s -  \hat Z_s)   \ud M_s $,  for each $t \in [0,T]$. It is sufficient to prove that $B=(B_t)_{0 \leq t \leq T}$ is a square-integrable $\bF$-martingale weakly orthogonal to $M$, that is, for every $\varphi \in \mathcal M_\H^2(0,T)$ we have
$$
 \esp{ \int_0^T \varphi_s \ud M_s   \int_0^T ( \tilde Z_s -  \hat Z_s)   \ud M_s } =  \esp{ \int_0^T \varphi_s ( \tilde Z_s -  \hat Z_s)  \langle M\rangle_s } = 0.
$$
In fact
$$\esp{ \int_0^T \varphi_s \tilde Z_s \langle M\rangle_s } = \esp{ \int_0^T \varphi_s ( \tilde Z \langle M\rangle)_s ^\bH} = \esp{ \int_0^T \varphi_s \hat Z_s \langle M\rangle_s ^\bH} =  \esp{ \int_0^T \varphi_s \hat Z_s \langle M\rangle_s}. $$ 

Finally, let us observe that the above equality is fulfilled for any $\bH$-predictable process $\varphi$. Hence we can choose   $\varphi = \hat Z$
and get
$$ \esp{\int_0^T|\hat Z_s|^2\ud \langle M\rangle_s} = \esp{\int_0^T\hat Z_s   \tilde Z_s\ud \langle M\rangle_s}. $$
Then, by Cauchy-Schwarz inequality we obtain
$$ \esp{\int_0^T|\hat Z_s|^2\ud \langle M\rangle_s} \leq  \left\{ \esp{\int_0^T |\tilde Z_s|^2 \ud \langle M\rangle_s} \right\}^{1\over 2}
 \left \{\esp{\int_0^T|\hat Z_s|^2 \ud \langle M\rangle_s} \right \}^{1\over 2}$$
  which in turn implies
$$ \esp{\int_0^T|\hat Z_s|^2\ud \langle M\rangle_s} \leq \esp{\int_0^T|\tilde Z_s|^2\ud \langle M\rangle_s} <  \infty.$$
\end{proof}
\begin{remark}
As shown in Section 4 of~\cite{ccr}, in some cases it is possible to compute explicitly the Radon-Nikod\'ym derivative of $L^\bH$ with respect to $\langle M\rangle^\bH$ that characterizes the component $Z$ of solution. For instance, if $\langle M\rangle$ is of the form
$$
\langle M\rangle_t=\int_0^t a_s\ud G_s, \quad t \in [0,T]
$$
for some $\bF$-predictable process $a=(a_t)_{0\leq t \leq T}$ and an increasing deterministic function $G$, then
$$
Z_t=\frac{{}^p(\tilde Z_t a_t)}{{}^pa_t},\quad t \in [0,T],
$$ 
where the notation ${}^p X$ refers to the $\bH$-predictable projection of the process $X$. Another meaningful example is given by assuming $\langle M \rangle$ to be $\bH$-predictable. In this case, we have
$$
 Z_t={}^p \tilde Z_t, \quad t \in [0,T].
$$
\end{remark}

\subsection{ Some special cases}
We are now in the position to provide an explicit characterization of the solution to the BSDE \eqref{eq:bsde1} under partial information in terms of the one related to the corresponding BSDE in the case of full information when the driver $f$ does not depend on $z$. 
 
\begin{proposition}\label{nuova1}
Suppose that the driver $f$ is independent of $z$ and let $(\tilde Y,\tilde Z, \tilde O) \in S_\F^2(0,T) \times \mathcal M_\F^2(0,T) \times \mathcal L_\F^2(0,T)$ be a solution to the following BSDE  under complete information 
\begin{equation} \label{eq:bsdeF1}
\tilde Y_t = \xi + \int_t^Tf(s,\tilde Y_{s-})\ud \langle M\rangle_s - \int_t^T \tilde Z_s \ud M_s - (\tilde O_T - \tilde O_t), \quad 0 \leq t \leq T,
\end{equation}
where $ \tilde O=(\tilde O_t)_{0 \leq t \leq T}$ is a square-integrable $\bF$-martingale strongly orthogonal to $M$.
Set $L_t:=\int_0^t \tilde Z_s \ud \langle M \rangle_s$ for each $t\in[0,T]$. Then, the triplet
$$
(Y,Z,O)=\left(\tilde Y, \frac{\ud L^\bH}{\ud \langle M\rangle^\bH},\tilde O + B\right),
$$
where $B= \int( \tilde Z_s -  Z_s)   \ud M_s $ is a square-integrable $\bF$-martingale weakly orthogonal to $M$, 
is a solution to the BSDE 
\begin{equation} \label{eq:bsdeH}
Y_t = \xi + \int_t^Tf(s,Y_{s-})\ud \langle M\rangle_s - \int_t^TZ_s \ud M_s - (O_T - O_t), \quad 0 \leq t \leq T,
\end{equation}
under partial information in the sense of Definition \ref{def:solBSDE}. 
\end{proposition}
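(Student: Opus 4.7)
The plan is to recognize that this proposition is a direct specialization of Proposition \ref{nuova} to drivers that do not depend on $z$. The key observation is that when $f$ is independent of $z$, the full-information BSDE \eqref{eq:full} appearing in Proposition \ref{nuova}, namely
$$
\tilde Y_t = \xi + \int_t^T f(s,\tilde Y_{s-},\hat Z_s)\ud \langle M\rangle_s - \int_t^T \tilde Z_s \ud M_s - (\tilde O_T - \tilde O_t),
$$
collapses exactly onto the BSDE \eqref{eq:bsdeF1} of the present statement, since the process $\hat Z$ simply drops out of the driver. This removes the implicit coupling between the driver and $\hat Z$ that is present in \eqref{eq:full}, so a genuine full-information solution $(\tilde Y,\tilde Z,\tilde O)$ of \eqref{eq:bsdeF1} can be treated as input for Proposition \ref{nuova} without any circularity.

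Next I would apply Proposition \ref{nuova} verbatim. With $L_t = \int_0^t \tilde Z_s \ud \langle M\rangle_s$ and $\hat Z_t := \ud L^\bH_t / \ud \langle M\rangle^\bH_t$ well defined (by Proposition 4.8 of \cite{ccr}, as recalled in the proof of Proposition \ref{nuova}), that proposition yields that the triplet $(\tilde Y,\hat Z,\tilde O + B)$, with $B_t = \int_0^t (\tilde Z_s - \hat Z_s)\ud M_s$, solves
$$
Y_t = \xi + \int_t^T f(s,Y_{s-},Z_s)\ud \langle M\rangle_s - \int_t^T Z_s \ud M_s - (O_T - O_t),\quad 0 \leq t \leq T,
$$
under partial information, with $Z \in \mathcal M_\H^2(0,T)$, $O = \tilde O + B \in \mathcal L_\F^2(0,T)$ weakly orthogonal to $M$. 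Because the driver does not depend on $z$, this is precisely \eqref{eq:bsdeH}, and identifying $Z = \hat Z = \ud L^\bH/\ud \langle M\rangle^\bH$ produces exactly the triplet claimed in the statement.

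I do not expect any serious obstacle: the non-routine content, namely the existence of the Radon--Nikod\'ym derivative $\ud L^\bH/\ud \langle M\rangle^\bH$, the square-integrability of $\hat Z$ (obtained via the Cauchy--Schwarz step against $\tilde Z$), and the weak orthogonality of $B$ and hence of $\tilde O + B$ to $M$ (since $\tilde O$ is strongly, hence by Remark \ref{R1} weakly, orthogonal to $M$), is already packaged inside Proposition \ref{nuova}. If a self-contained proof were preferred, one would simply reproduce those three verifications in the present setting, the arguments being identical.
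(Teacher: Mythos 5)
Your argument is correct and coincides with the paper's own proof, which simply states that Proposition \ref{nuova1} is a direct consequence of Proposition \ref{nuova}; you have just spelled out the (valid) observation that when $f$ does not depend on $z$ the full-information equation \eqref{eq:full} reduces to \eqref{eq:bsdeF1}, removing the coupling through $\hat Z$. Nothing further is needed.
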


\begin{proof}
It is a direct consequence of Proposition \ref{nuova}.
\end{proof}
We conclude this subsection by applying Proposition \ref{nuova} to provide existence of the solution to a BSDE under partial information in the special case where ${\mathcal H}_t = {\mathcal F}_{(t-\tau)^+}$ for each $t \in [0,T]$,
with $\tau \in (0,T)$ being a fixed delay, the driver does not depend on $y$ and $\langle M\rangle$ and $f(\cdot,\cdot,z)$ are $\bH$-predictable processes. This approach allows us to weaken the assumptions required in Theorem \ref{th:ex-uniq}. More precisely, %we do not need Assumption \ref{ass:bracket} and% 
we  just require that $f$ satisfies a sublinear growth condition in   $z$. 

Without loss of generality, we take $T = \tau N$, with $N \in \N$. We will solve backwardly equation (\ref{eq:full}) on each interval  $I_j= [(j-1)\tau, j\tau]$, $j\in\{1,\ldots,N\}$. To this aim we need a preliminary Lemma.
 
\begin{lemma} \label{Lj}
Let ${\mathcal H}_t = {\mathcal F}_{(t-\tau)^+}$, for each $t \in [0,T]$, with $\tau \in (0,T)$ being a fixed delay and assume that $\langle M \rangle_T \leq C(T)$ $\P$-a.s., where $C(T)$ is a positive constant depending on $T$. Let $\langle M\rangle$  and $f(\cdot,\cdot,z)$ be $\bH$-predictable and $f$ to satisfy a sublinear growth  condition with respect to $z$ uniformly in $(\omega,t)$, i.e.

$\exists$ $C\geq 0$ such that $\forall z \in \R,$ $
|f(\omega,t,z)|^2 \leq C(1 + |z|^2)  \quad (\P \otimes \langle M\rangle)-\mbox{a.e.\ on}\ \Omega \times [0,T];
$
 
Let $\xi^j \in  L^2(\Omega,\F_{j\tau},\P;\R)$. Then there exists a solution  $(\tilde Y^j,\tilde Z^j, \tilde O^j)\in \mathcal S_\F^2((j-1)\tau, j\tau) \times \mathcal M_\F^2((j-1)\tau, j\tau) \times \mathcal L_\F^2((j-1)\tau, j\tau)$ to the problem under complete information
\begin{equation} \label{eqj:full}
\tilde Y^j_t = \xi^j + \int_t^{j\tau}f(s, {}^p \tilde Z^j_s)\ud \langle M\rangle_s - \int_t^{j\tau} \tilde Z^j_s \ud M_s - (\tilde O^j_{j\tau} - \tilde O^j_t), \quad (j-1)\tau \leq t \leq j\tau,
\end{equation}
where $\tilde O^j$ is strongly orthogonal to $M$ and $ {}^p \tilde Z^j$ denotes the $\bH$-predictable projection of  $\tilde Z^j$, that is, 
$ {}^p \tilde Z^j_t = \esp{ \tilde Z^j_t | \mathcal H_{t^-}}$, for every $t \in [0,T]$. 
\end{lemma}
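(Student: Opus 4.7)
The plan is to build the solution by hand, exploiting the interplay between the delay $\tau$ in $\bH$ and the $\bH$-predictability of $\langle M\rangle$ and of $f(\cdot,\cdot,z)$. The key observation is that on $I_j=[(j-1)\tau,j\tau]$, every $\bH$-predictable process takes values in $\F_{(s-\tau)^-}$ at time $s\in I_j$, so the driver integral $\int_t^{j\tau} f(s,{}^p\tilde Z^j_s)\ud\langle M\rangle_s$ — whose integrand and $\ud$-measure are both $\bH$-predictable — is $\F_{(j-1)\tau}$-measurable. This \emph{decouples} the martingale part of the BSDE from the driver, so that $\tilde Z^j$ and $\tilde O^j$ can be read off directly from the Galtchouk-Kunita-Watanabe (GKW) decomposition of $\xi^j$ alone, with no fixed-point iteration needed.

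Concretely, I would first apply the full-information GKW decomposition to the $\bF$-martingale $t\mapsto \mathbb E[\xi^j|\F_t]$ on $I_j$, producing an integrand $\zeta\in\mathcal M_\bF^2((j-1)\tau,j\tau)$ and a square-integrable $\bF$-martingale $N$ strongly orthogonal to $M$ with
\[
\xi^j=\mathbb E[\xi^j|\F_{(j-1)\tau}]+\int_{(j-1)\tau}^{j\tau}\zeta_s\ud M_s+(N_{j\tau}-N_{(j-1)\tau}).
\]
I then set $\tilde Z^j:=\zeta$, $\tilde O^j:=N$, and $\hat z_s:={}^p\tilde Z^j_s=\mathbb E[\zeta_s|\F_{(s-\tau)^-}]$. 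The decoupling claim is then verified: for $s\in I_j$, $\hat z_s$, $\langle M\rangle_s$ and $f(\cdot,s,\hat z_s)$ are each $\F_{(s-\tau)^-}$-measurable (the first by definition, the second because $\langle M\rangle$ is $\bH$-predictable, the third because $f(\cdot,\cdot,z)$ is $\bH$-predictable and composition with a $\bH$-predictable process preserves $\bH$-predictability), so approximating the integral by Riemann sums shows that $\int_t^{j\tau} f(s,\hat z_s)\ud\langle M\rangle_s\in\F_{(j-1)\tau}$ for every $t\in I_j$.

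I then define
\[
\tilde Y^j_t:=\mathbb E[\xi^j|\F_t]+\int_t^{j\tau} f(s,\hat z_s)\ud\langle M\rangle_s,\qquad t\in I_j.
\]
The previous step makes the integral term $\F_t$-measurable on $I_j$, so $\tilde Y^j$ is $\bF$-adapted and càdlàg. Membership $\tilde Y^j\in\mathcal S_\bF^2((j-1)\tau,j\tau)$ follows from Doob's inequality applied to the martingale part, combined with the sublinear growth $|f(s,z)|^2\leq C(1+|z|^2)$, the bound $\langle M\rangle_T\leq C(T)$, and the Jensen estimate $\|\hat z\|_{\mathcal M^2}\leq\|\zeta\|_{\mathcal M^2}\leq \mathbb E[|\xi^j|^2]^{1/2}$ — a routine Cauchy-Schwarz computation. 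Finally, substituting the GKW identity $\xi^j-\mathbb E[\xi^j|\F_t]=\int_t^{j\tau}\tilde Z^j_s\ud M_s+(\tilde O^j_{j\tau}-\tilde O^j_t)$ into the definition of $\tilde Y^j_t$ yields \eqref{eqj:full} directly; strong orthogonality of $\tilde O^j$ to $M$ is inherited from $N$.

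The main obstacle is conceptual rather than analytic: recognising that the combined $\bH$-predictability of $\langle M\rangle$ and $f(\cdot,\cdot,z)$, together with ${}^p\tilde Z^j_s\in\F_{(s-\tau)^-}$, forces the whole driver integral on $I_j$ to lie in $\F_{(j-1)\tau}$. Once this is seen the Lipschitz hypothesis of Theorem \ref{th:ex-uniq} becomes unnecessary: no contraction in $\tilde Z^j$ is required, since ${}^p\tilde Z^j$ is determined by $\xi^j$ through GKW independently of the driver, and sublinear growth serves only to secure the $L^2$-integrability of $f(s,\hat z_s)$ against $\ud\langle M\rangle_s$.
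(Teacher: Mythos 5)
Your proposal is correct and follows essentially the same route as the paper's proof: the GKW decomposition of $\xi^j$ under full information supplies $\tilde Z^j$ and $\tilde O^j$ directly, $\tilde Y^j$ is defined as $\mathbb E[\xi^j|\F_t]$ plus the driver integral, adaptedness follows from the $\F_{(j-1)\tau}$-measurability of that integral (a consequence of the delay structure and $\bH$-predictability), and square-integrability is obtained via sublinear growth, Jensen for the predictable projection, and Doob. The only difference is that you make the decoupling observation more explicit than the paper does, which is a welcome clarification rather than a deviation.
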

 
 \begin{proof}
 According to the Galtchouk-Kunita-Watanabe decomposition of $ \xi^j$ under full information, there exists $\tilde Z^j \in \mathcal M_\F^2((j-1)\tau, j\tau)$ such that
 \begin{equation} \label{eqj}
 \xi^j= \esp{\xi^j | {\mathcal F}_{(j-1) \tau}} +  \int_{(j-1) \tau}^{j\tau} \tilde Z^j_s \ud M_s + \left(\tilde O^j_{j\tau} - \tilde O^j_{(j-1) \tau}\right),
  \end{equation} 
 where $\tilde O^j \in  L_\F^2((j-1)\tau, j\tau)$ is strongly orthogonal to $M$. For every $t \in [(j-1)\tau, j\tau]$, we set 
 \begin{equation}  \label{eqj1}
  Y^j_t = \esp{\xi^j | {\mathcal F}_t} +  \int_t^{j\tau}f(s, {}^p \tilde Z^j_s) \ud \langle M\rangle_s.
  \end{equation} 
  Let us observe that $Y^j \in S_\F^2((j-1)\tau, j\tau)$.  In fact,  since  $\int_t^{j\tau}f(s, {}^p \tilde Z^j_s) \ud \langle M\rangle_s$ is ${\mathcal F}_{(j-1) \tau}$-measurable $Y^j$ turns out to be 
 $\bF$-adapted. By the sublinear growth condition on $f$, Jensen's inequality and the property of the $\bH$-predictable projection, we get
 \begin{align*}
 \esp{ \int_{(j-1)\tau}^{j\tau}|f(s, {}^p \tilde Z^j_s)|^2 \ud \langle M\rangle_s} & \leq  \esp{\int_t^{j\tau} C( 1+ |{}^p \tilde Z^j_s|^2) \ud \langle M\rangle_s} \\
& \leq \esp{  \int_{(j-1)\tau}^{j\tau} C ( 1+ {}^p (| \tilde Z^j|^2_s)) \ud \langle M\rangle_s} \\
& = C \hskip 1mm
 \esp{ \langle M\rangle_{j\tau} -  \langle M\rangle_{(j-1)\tau}  +\int_{(j-1)\tau}^{j\tau}  |\tilde Z^j|^2_s \ud \langle M\rangle_s} < \infty,
\end{align*}
and by performing the same computation as in the proof of Lemma \ref{lem:existence}, we finally obtain
 $$ \esp{\sup_{(j-1)\tau\leq t\leq j\tau }|Y^j_t|^2}  \leq 8\esp{|\xi^j|^2} + 8 C(T) \esp{\int_{(j-1)\tau}^{j\tau} |f(s, {}^p \tilde Z^j_s)|^2 \ud \langle M\rangle_s} < \infty. $$
 We now take the conditional expectation with respect to $ {\mathcal F}_t$ in \eqref{eqj} and for each $ t \in [(j-1)\tau, j\tau]$ we obtain
 \begin{equation}  \label{eqj2}
 \esp{\xi^j | {\mathcal F}_t} -  \esp{\xi^j | {\mathcal F}_{(j-1) \tau}}  =   \int_{(j-1) \tau}^{t} \tilde Z^j_s \ud M_s + \left(\tilde O^j_{t} - \tilde O^j_{(j-1) \tau}\right).
 \end{equation}
 At this stage, subtracting \eqref{eqj2} and \eqref{eqj} yields
 $$\esp{\xi^j | {\mathcal F}_t} - \xi^j = - \int_{(j-1) \tau}^{t} \tilde Z^j_s \ud M_s - \left(\tilde O^j_{j \tau t} - \tilde O^j_{t}\right)$$
and using (\ref{eqj1})  we get 
 $$  Y^j_t  -  \int_t^{j\tau}f(s, {}^p \tilde Z^j_s) \ud \langle M\rangle_s - \xi^j = - \int_{(j-1) \tau}^{t} \tilde Z^j_s \ud M_s - \left(\tilde O^j_{j \tau } - \tilde O^j_{t}\right),$$
 which concludes the proof.
\end{proof} 
We are now in the position to state the following result.
 \begin{proposition}\label{2nuova}
Let ${\mathcal H}_t = {\mathcal F}_{(t-\tau)^+}$, for each $t \in [0,T]$, with $\tau \in (0,T)$ being a fixed delay and assume that $\langle M \rangle_T \leq C(T)$ $\P$-a.s., where $C(T)$ is a positive constant depending on $T$. Let  $\xi \in L^2(\Omega,\F_{T},\P;\R)$, $\langle M\rangle$  and $f(\cdot,\cdot,z)$ be $\bH$-predictable and $f$ to satisfy a sublinear growth  condition with respect to $z$ uniformly in $(\omega,t)$, i.e.

$\exists$ $C\geq 0$ such that $\forall z \in \R,$ $
|f(\omega,t,z)|^2 \leq C(1 + |z|^2)  \quad (\P \otimes \langle M\rangle)-\mbox{a.e.\ on}\ \Omega \times [0,T].$

Then, there exists a solution $( Y, Z,  O) \in S_\F^2(0,T) \times \mathcal M_\H^2(0,T) \times \mathcal L_\F^2(0,T)$ to the BSDE under restricted information
\begin{equation} \label{lasteq}
Y_t= \xi  +  \int_t^{T}f(s, Z_s) \ud \langle M\rangle_s   - \int_t^T Z_s \ud M_s - (\tilde O_T - \tilde O_{t}).
\end{equation}
\end{proposition}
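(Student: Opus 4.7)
The strategy is to build a solution by backward iteration on the grid $\{j\tau\}_{j=0}^N$ provided by $T = N\tau$, using Lemma \ref{Lj} to solve piecewise a full-information BSDE whose driver is already evaluated at the $\bH$-predictable projection, and then gluing the pieces and applying Proposition \ref{nuova}. The key observation is that, since $\langle M\rangle$ is $\bH$-predictable, the Radon--Nikod\'ym derivative $\ud L^\bH/\ud\langle M\rangle^\bH$ appearing in Proposition \ref{nuova} coincides with ${}^p\tilde Z$ (as noted in the remark following that proposition), so the $Z$-component of the partial-information solution will automatically be the $\bH$-predictable projection of the full-information $\tilde Z$.

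I would start with $\xi^N := \xi \in L^2(\Omega,\F_T,\P;\R)$ and apply Lemma \ref{Lj} on $I_N := [(N-1)\tau, N\tau]$ to obtain $(\tilde Y^N, \tilde Z^N, \tilde O^N)$. Since $\tilde Y^N \in \mathcal S^2_\F(I_N)$, the left endpoint value $\xi^{N-1} := \tilde Y^N_{(N-1)\tau}$ lies in $L^2(\Omega,\F_{(N-1)\tau},\P;\R)$, so Lemma \ref{Lj} applies again on $I_{N-1}$. Iterating down to $j = 1$ produces matching triplets $(\tilde Y^j, \tilde Z^j, \tilde O^j)$ on each $I_j$. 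Concatenation gives $(\tilde Y, \tilde Z, \tilde O) \in \mathcal S^2_\F(0,T) \times \mathcal M^2_\F(0,T) \times \mathcal L^2_\F(0,T)$; the boundary identification $\xi^j = \tilde Y^{j+1}_{j\tau}$ together with $\tilde O^j_{(j-1)\tau} = 0$ makes the pieces telescope to the global equation
$$\tilde Y_t = \xi + \int_t^T f(s,{}^p\tilde Z_s)\,\ud\langle M\rangle_s - \int_t^T \tilde Z_s\,\ud M_s - (\tilde O_T - \tilde O_t), \qquad 0 \le t \le T.$$

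Applying Proposition \ref{nuova} to this glued triplet, with $\hat Z = {}^p\tilde Z$, then delivers the solution $(Y, Z, O) = (\tilde Y,\, {}^p\tilde Z,\, \tilde O + B)$ of \eqref{lasteq}, where $B = \int(\tilde Z - {}^p\tilde Z)\,\ud M$ is weakly orthogonal to $M$ by the defining property of the $\bH$-predictable projection (using that $\langle M\rangle$ is $\bH$-predictable). The main obstacle I foresee is the hypothesis of Proposition \ref{nuova} that the glued $\tilde O$ be strongly orthogonal to $M$: each piece $\tilde O^j$ is strongly orthogonal to $M$ on $I_j$, but this has to be promoted to a global property. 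I would establish it by a tower-property argument across grid points — for $s \in I_j$ and $t \in I_k$ with $j \leq k$, iterated conditioning on $\F_{(k-1)\tau},\ldots,\F_{j\tau}$ reduces $\esp{\tilde O_t M_t \mid \F_s}$ to $\tilde O_s M_s$, using on each piece both the strong orthogonality of $\tilde O^\ell$ to $M$ on $I_\ell$ and the initialisation $\tilde O^\ell_{(\ell-1)\tau} = 0$.
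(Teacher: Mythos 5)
Your proposal follows essentially the same route as the paper: solve the full-information problem \eqref{eqj:full} backwards on each interval $[(j-1)\tau,j\tau]$ via Lemma \ref{Lj} with $\xi^N=\xi$ and $\xi^j=\tilde Y^{j+1}_{j\tau}$, glue the pieces into a global solution of \eqref{eq1:full}, and conclude by Proposition \ref{nuova} with $\hat Z={}^p\tilde Z$ (valid since $\langle M\rangle$ is $\bH$-predictable). Your additional care in verifying the global strong orthogonality of the concatenated $\tilde O$ to $M$ is a point the paper's proof leaves implicit, and your tower-property argument handles it correctly.
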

 
\begin{proof}
We apply Lemma \ref{Lj}. Set  $\xi^N = \xi$, and $\xi^j = \tilde Y^{j+1}_{j\tau}$, $j=1, 2... N-1$,  where  $(\tilde Y^j,\tilde Z^j, \tilde O^j)\in \mathcal S_\F^2((j-1)\tau, j\tau) \times \mathcal M_\F^2((j-1)\tau, j\tau) \times \mathcal L_\F^2((j-1)\tau, j\tau)$ is the solution of the problem under complete information \eqref{eqj:full}. 

Set $\tilde Y_t:= \sum_{j=1}^N Y^j_t \I_{\{ t \in [(j-1)\tau, j\tau)\}}$, $\tilde Z_t := \sum_{j=1}^N Y^j_t \I_{\{ t \in ((j-1)\tau, j\tau]\}}$, $\tilde O_t:= \sum_{j=1}^N O^j_t \I_{\{ t \in [(j-1)\tau, j\tau)\}}$.
Then, we get that  the triplet 
$$(\tilde Y, \tilde Z,  \tilde O)  \in \mathcal S_\F^2(0,T) \times \mathcal M_\F^2(0,T) \times \mathcal L_\F^2(0,T)$$ 
is a solution to the problem under complete information
\begin{equation} \label{eq1:full}
\tilde Y_t = \xi + \int_t^Tf(s, {}^p \tilde Z_s)\ud \langle M\rangle_s - \int_t^T \tilde Z_s \ud M_s - (\tilde O_T - \tilde O_t), \quad 0 \leq t \leq T.
\end{equation}
 
Finally by applying Proposition \ref{nuova},  the triplet $( Y, Z,  O) = (\tilde Y,  {}^p \tilde Z, \tilde O + B) \in \mathcal S_\F^2(0,T) \times \mathcal M_\H^2(0,T) \times \mathcal L_\F^2(0,T)$,
 where $B= \int( \tilde Z_s -  Z_s)   \ud M_s $ is a square-integrable $\bF$-martingale weakly orthogonal to $M$,
solves the BSDE (\ref{lasteq}) under restricted information. 
\end{proof}

In the next section, we will 
apply the existence and uniqueness results obtained for BSDEs to derive the F\"ollmer-Schweizer decomposition in a partial information framework and 
discuss a financial application. More precisely, we will study the hedging problem of a contingent claim in incomplete markets when the underlying price process is given by a general $\bF$-semimartingale and there are restrictions on the available information to traders. 

\section{Local Risk-Minimization under restricted information} \label{LRM}

\noindent Let us fix a probability space $(\Omega, \F, \P)$ endowed with a filtration $\bF :=(\F_t)_{0\leq t\leq T}$ satisfying the usual
conditions of right-continuity and completeness. Here $T >0$ denotes a fixed and finite time horizon; furthermore, we assume that $\F=\F_T$.
We consider a financial market with one riskless asset with (discounted) price
1 and a risky asset whose (discounted) price $S$ is described by an $\R$-valued square-integrable (càdlàg) $\bF$-semimartingale  $S=(S_t)_{0 \leq t \leq T}$ satisfying the so-called structure condition (SC), that is 
\begin{equation} \label{eq:SC}
S_t = S_0 + M_t + \int_0^t \alpha_s \ud \langle M\rangle_s, \quad 0 \leq t \leq T,
\end{equation}
where $M=(M_t)_{0 \leq t \leq T}$ is an $\R$-valued square-integrable (càdlàg) $\bF$-martingale with $M_0=0$ and $\bF$-predictable
quadratic variation process denoted by $\langle M\rangle = (\langle M,M\rangle)_{0 \leq t \leq T}$ and $\alpha=(\alpha_t)_{0 \leq t \leq T}$ is an $\bF$-predictable  process
such that $\esp{\int_0^T| \alpha_t|^2\ud \langle M\rangle_t} < \infty$.
\begin{remark}
It is known that the existence of an equivalent martingale measure for the risky asset price process $S$ implies that $S$ is an $\bF$-semimartingale under the basic measure $\P$. Then, the semimartingale structure for $S$ is a natural assumption in a financial market model which ensures the absence of arbitrage opportunities. If in addition, $S$ has continuous trajectories or has càdlàg paths and the following condition holds
$$
\esp{\sup_{t \in [0,T]}S_t^2} < \infty,
$$
then $S$ satisfies the structure condition (SC), see page 24 of~\cite{as} and Theorem 1 in~\cite{ms95}.   
\end{remark}
In this framework we consider a contingent claim whose payoff is represented by a random variable $\xi \in L^2(\Omega,\F_T,\P;\R)$.
Under the condition that the mean-variance tradeoff process $K=(K_t)_{0 \leq t \leq T}$ defined by
$$
K_t := \int_0^t \alpha^2_s \ud \langle M\rangle_s, \quad \forall t \in [0,T], 
$$
is uniformly bounded in $t$ and $\omega$, 
in Theorem 3.4 of~\cite{ms95} it is proved  that every $\xi \in L^2(\Omega,\F_T,\P;\R)$ admits a strong F\"ollmer-Schweizer decomposition with respect to $S$, that is 
 \begin{equation} \label{F-S}
\xi = \tilde U_0 + \int_0^T \beta_t  \ud S_t + \tilde A_T, \quad \P-\mbox{a.s.},
\end{equation}
where $\tilde U_0 \in L^2(\Omega,\F_0,\P;\R)$, $\beta = (\beta_t)_{0 \leq t \leq T}$ is an $\bF$-predictable process such that the stochastic integral $\int \beta_t \ud S_t$ is well-defined and it is a square-integrable $\bF$-semimartingale and $\tilde A \in \mathcal L_\F^2(0,T)$ is strongly orthogonal to $M$, see \eqref{eq:SC}.
Moreover, it is known that every $\xi \in L^2(\Omega,\F_T,\P;\R)$ admits a decomposition (\ref{F-S}) if and only if there exists a locally risk-minimizing hedging strategy (see e.g.~\cite{fs,s01}) and in addition this decomposition plays an essential role in the variance-minimizing strategy computation (see~\cite{ms94} for further details).\\
Suppose now that the hedger does not have at her/his disposal the full information represented by
$\bF$; her/his strategy must be constructed from less information. This leads to a partial information framework. To describe this mathematically,
we introduce an additional filtration $\bH :=(\H_t)_{0\leq t\leq T}$ satisfying the usual conditions and such that $\H_t \subseteq \F_t$, for every $t \in [0,T]$. 
Thanks to Theorem \ref{th:ex-uniq}, we are now in the position to derive a similar decomposition in a partial information setting. We need the following additional hypothesis.
\begin{ass} \label{ass:alpha}
There exists a constant $\bar K \ge 0$ such that the process $\alpha$ in \eqref{eq:SC} satisfies:
$$
|\alpha_t(\omega)| \leq \bar K, \quad (\P \otimes \langle M\rangle)-\mbox{a.e.\ on}\ \Omega \times [0,T].
$$
\end{ass}

\begin{proposition} \label{th:FS} 
Let
Assumptions \ref{ass:bracket} and \ref{ass:alpha} hold.
Then, every $\xi \in L^2(\Omega,\F_T,\P;\R)$ admits the following decomposition
\begin{equation} \label{F-Sweak}
\xi = \bar U_0 + \int_0^T \beta^\H_t  \ud S_t + A_T, \quad \P-\mbox{a.s.},
\end{equation}
where $\bar U_0 \in L^2(\Omega,\F_0,\P;\R)$, $\beta^\H = (\beta_t^\H)_{0 \leq t \leq T} \in \mathcal M_\H^2(0,T)$ and $A \in \mathcal L_\F^2(0,T)$ is
weakly orthogonal to $M$. 
\end{proposition}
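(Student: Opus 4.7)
The plan is to reduce the decomposition \eqref{F-Sweak} to the existence theorem for BSDEs under partial information (Theorem \ref{th:ex-uniq}). Using the structure condition \eqref{eq:SC}, any candidate integrand $\beta^\H \in \mathcal M_\H^2(0,T)$ satisfies
$$
\int_0^T \beta_t^\H \ud S_t = \int_0^T \beta_t^\H \ud M_t + \int_0^T \beta_t^\H \alpha_t \ud \langle M \rangle_t,
$$
so \eqref{F-Sweak} can be rewritten in backward form as
$$
\bar U_0 = \xi - \int_0^T \alpha_t \beta_t^\H \ud \langle M \rangle_t - \int_0^T \beta_t^\H \ud M_t - A_T.
$$
This suggests introducing the BSDE
$$
Y_t = \xi + \int_t^T \bigl(-\alpha_s Z_s\bigr) \ud \langle M \rangle_s - \int_t^T Z_s \ud M_s - (O_T - O_t), \quad 0 \leq t \leq T,
$$
with driver $f(s,y,z) = -\alpha_s z$.

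First I would verify that Assumption \ref{ass:driver} is satisfied: $f$ does not depend on $y$ and is Lipschitz in $z$ with constant $\bar K$ by Assumption \ref{ass:alpha}, while $f(s,0,0) \equiv 0$ gives the integrability requirement for free. Assumption \ref{ass:bracket} is part of the hypotheses. Hence Theorem \ref{th:ex-uniq} applies and produces a unique triple $(Y,Z,O) \in \mathcal S_\F^2(0,T) \times \mathcal M_\H^2(0,T) \times \mathcal L_\F^2(0,T)$ solving this BSDE, with $O$ weakly orthogonal to $M$ in the sense of \eqref{eq:orthogcond}.

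Next I would set $\bar U_0 := Y_0$, $\beta_t^\H := Z_t$ and $A_t := O_t$. Evaluating the BSDE at $t=0$ and rearranging gives
$$
\xi = Y_0 + \int_0^T Z_s \ud M_s + \int_0^T \alpha_s Z_s \ud \langle M \rangle_s + O_T,
$$
and since $Z$ is $\bH$-predictable (hence $\bF$-predictable) and square integrable against $\langle M \rangle$, while $\alpha$ is bounded, the sum of the two middle terms coincides with $\int_0^T Z_s \ud S_s$. Recalling $O_0 = 0$ (as $O \in \mathcal L_\F^2(0,T)$), this yields exactly \eqref{F-Sweak}. The fact that $\bar U_0 \in L^2(\Omega,\F_0,\P;\R)$ follows from $Y \in \mathcal S_\F^2(0,T)$.

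The only delicate point is really the translation step: one must ensure that $\int_0^T Z_s \ud S_s$ is a bona fide semimartingale integral for the $\bH$-predictable, but in general not $\bF$-locally bounded, integrand $Z$. This is not an obstacle here because the $M$-part of $S$ is handled by $Z \in \mathcal M_\H^2(0,T) \subseteq \mathcal M_\F^2(0,T)$ and the finite variation part is controlled pathwise by $|\alpha| \leq \bar K$ combined with $Z \in \mathcal M_\F^2(0,T)$, so both integrals are well defined and can be legitimately summed to $\int Z \ud S$. Everything else reduces to identifying coefficients in the BSDE with those in \eqref{F-Sweak}.
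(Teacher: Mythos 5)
Your proposal is correct and follows essentially the same route as the paper: both introduce the BSDE with driver $f(t,y,z)=-\alpha_t z$, check that Assumption \ref{ass:driver} holds via the boundedness of $\alpha$, invoke Theorem \ref{th:ex-uniq}, and identify $\bar U_0=Y_0$, $\beta^\H=Z$, $A=O$ after recombining the $\ud M$ and $\ud\langle M\rangle$ integrals into $\int Z_s\,\ud S_s$. Your extra remark on the well-posedness of $\int Z_s\,\ud S_s$ is a harmless elaboration of a point the paper leaves implicit.
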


\noindent In the martingale case where $\alpha \equiv 0$ in \eqref{eq:SC}, representation \eqref{F-Sweak} corresponds to the Galtchouk-Kunita-Watanabe decomposition \eqref{eq:GKW} of $\xi$ under partial information. In the general semimartingale case, \eqref{F-Sweak} is referred as the F\"ollmer-Schweizer decomposition of $\xi$ with respect to $S$ under partial information.
\begin{proof}
Let us consider the driver of the BSDE \eqref{eq:bsde1} under partial information  given by $f(t,y,z) =- z \alpha$, where $\alpha$ is the bounded process introduced in \eqref{eq:SC}. Since Assumption \ref{ass:driver} is fulfilled, by Theorem \ref{th:ex-uniq} there exists a unique triplet $(Y,Z,O)$ which solves the equation
\begin{equation}\label{eq:bsdeFS}
Y_t=\xi - \int_t^T Z_s \alpha_s \ud \langle M\rangle_s - \int_t^T Z_s \ud M_s -(O_T - O_t), \quad 0 \leq t \leq T,
\end{equation}
under partial information in the sense of Definition \ref{def:solBSDE}.
Hence 
$$\xi = Y_T = Y_0 +  \int_0^T Z_s \alpha_s \ud \langle M\rangle_s + \int_0^T Z_s \ud M_s  + O_T =  Y_0 +  \int_0^T Z_s \ud S_s + O_T$$
and we obtain decomposition (\ref{F-Sweak}) by setting $\bar U_0=Y_0$, $ \beta^\H_t = Z_t$ and $A_t=O_t$, for every $t \in [0,T]$.
 \end{proof}

\begin{remark}
Note that if $Y$ represents the wealth that satisfies the replication constraint $Y_T=\xi$ $\P$-a.s., the triplet $(Y,\beta^\H,A)$ may be interpreted as the {\em nonadjusted hedging strategy} against $\xi$. Clearly, the 
self-financing condition of the strategy is no longer ensured due to the 
presence of the cost $A$, see~\cite{kpq} for further details. 
\end{remark}

\noindent We now study the relationship between the F\"ollmer-Schweizer decomposition of a contingent claim $\xi \in L^2(\Omega,\F_T,\P;\R)$
under partial information and the
existence of a {\em locally risk-minimizing} strategy in a partial information
framework. In the sequel, we will suppose that Assumptions \ref{ass:bracket} and \ref{ass:alpha} are in force.\\
In this setting, the amount $\theta=(\theta_t)_{0 \leq t \leq T}$ invested by the agent in the risky asset has to be adapted to the information flow $\bH$ and such that the stochastic integral $\int \theta_u \ud S_u$ turns out to be a square-integrable $\bF$-semimartingale. By Assumption \ref{ass:bracket} and boundedness of $\alpha$, we will look at the class of processes $\theta$ such that $\theta \in \mathcal M_\H^2(0,T)$. Indeed,
\begin{align*}
\esp{\int_0^T \theta_s^2 \ud \langle M\rangle_s + \left(\int_0^T|\theta_s||\alpha_s| \ud  \langle M\rangle_s\right)^2} & \leq \esp{\int_0^T \theta_s^2 \ud \langle M\rangle_s + \bar{K}^2\left(\int_0^T|\theta_s|\ud \langle M\rangle_s\right)^2}\\
& \leq \esp{\int_0^T \theta_s^2 \ud \langle M\rangle_s + \bar{K}^2 \int_0^T \theta_s^2 \ud \langle M\rangle_s \cdot \langle M\rangle_T}\\
& \leq \left(1 + \bar{K}^2\rho(T)\right) \esp{\int_0^T \theta_s^2 \ud \langle M\rangle_s}.
\end{align*}
Clearly, in this case Assumption \ref{ass:bracket} can be weakened by requiring that $\langle M\rangle_T \leq C(T)$ $\P$-a.s., for a positive constant $C(T)$ depending on $T$.
\begin{definition}
An $(\bH,\bF)$-{\em admissible strategy} is a pair $\Psi=(\theta,\eta)$ where $\theta \in \mathcal M_\H^2(0,T)$ and $\eta=(\eta_t)_{0 \leq t \leq T}$ is a real-valued $\bF$-adapted process such that the value process $V(\Psi):=\theta S + \eta$ is right-continuous and satisfies $V_t(\Psi) \in  L^2(\Omega,\F_t,\P;\R)$ for each $t \in [0,T]$.
\end{definition}

\begin{remark}
We assume that the agent has at her/his disposal the information flow $\bH$ about trading in the risky asset while a complete information about trading in the riskless asset. 
\end{remark}
\noindent Given an $(\bH,\bF)$-admissible strategy $\Psi$, the associated {\em cost process} $C(\Psi)=(C_t(\Psi))_{0 \leq t \leq T}$ is defined by
$$
C_t(\Psi)=V_t(\Psi)-\int_0^t \theta_s \ud S_s, \quad \forall t \in [0,T].
$$
Here $C_t(\Psi)$ describes the total costs incurred by $\Psi$ over the interval $[0,t]$. The $\bH$-{\em risk process} $R^\H(\Psi)=(R_t^\H(\Psi))_{0 \leq t \leq T}$ of $\Psi$ is then defined by
\begin{equation} \label{def:risk}
R_t^\H(\Psi):=\condesph{\left(C_T(\Psi)-C_t(\Psi)\right)^2}, \quad  \forall t \in [0,T].
\end{equation}
Although $(\bH,\bF)$-admissible strategies $\Psi$ with $V_T(\Psi)=\xi$ will in general not be self-financing, it turns out that good $(\bH,\bF)$-admissible strategies are still self-financing on average in the following sense.

\begin{definition}
An $(\bH,\bF)$-admissible strategy $\Psi$ is called {\em mean-self-financing} if its cost process $C(\Psi)$ is an $\bF$-martingale.
\end{definition}
\noindent Inspired by~\cite{s93}, an $(\bH,\bF)$-admissible strategy $\Psi$ is called $(\bH,\bF)$-{\em locally risk-minimizing} if, for any $t < T$, the remaining risk $R^\H(\Psi)$, see \eqref{def:risk}, is minimal under all infinitesimal perturbations of the strategy at time $t$. For further details, we refer to Definition \ref{def:lrm} in Appendix.

\begin{proposition} \label{prop:characterization}
Suppose that $\langle M \rangle$ is $\P$-a.s. strictly increasing. Let $\xi \in L^2(\Omega,\F_T,\P;\R)$ be a contingent claim and $\Psi$ an $(\bH,\bF)$-admissible strategy with $V_T(\Psi)=\xi$ $\P$-a.s.. Then $\Psi$ is $(\bH,\bF)$-locally risk-minimizing if and only if $\Psi$ is mean-self-financing and the $\bF$-martingale $C(\Psi)$ is weakly orthogonal to $M$.
\end{proposition}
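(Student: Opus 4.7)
The plan is to adapt Schweizer's classical characterization of local risk-minimization to the partial information setting, using the structure condition \eqref{eq:SC} to split $\int \delta\,\ud S = \int \delta\,\ud M + \int \delta \alpha\,\ud\langle M\rangle$ and to read off first-order conditions from the variation of the $\bH$-risk under perturbations supported on small intervals $(s,t] \subset [0,T)$, as captured by Definition \ref{def:lrm} of the Appendix.

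For a perturbation $(\delta,\epsilon)|_{(s,t]}$ with $\delta \in \mathcal M_\H^2(0,T)$ and $\epsilon$ real-valued $\bF$-adapted, a direct calculation gives $C_T(\Psi^{new}) - C_s(\Psi^{new}) = (C_T - C_s) - \int_s^t \delta\,\ud S$ (the $\epsilon$-part cancels in the terminal increment when $t<T$), so
\[
R_s^\H(\Psi^{new}) - R_s^\H(\Psi) = \esp{\left(\int_s^t \delta\,\ud S\right)^2 \,\Big|\, \H_s} - 2\,\esp{(C_T - C_s)\int_s^t \delta\,\ud S \,\Big|\, \H_s}.
\]
Splitting $\ud S = \ud M + \alpha\,\ud\langle M\rangle$, Assumptions \ref{ass:bracket} and \ref{ass:alpha} combined with Cauchy--Schwarz imply that the drift contributions to both the quadratic and the cross term are of strictly lower order than $\langle M\rangle_t - \langle M\rangle_s$ as $t \to s^+$. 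The asymptotic behaviour of the ratio governing local optimality is therefore driven by $\esp{\int_s^t \delta^2\,\ud\langle M\rangle \,|\, \H_s} - 2\,\esp{(C_T - C_s)\int_s^t \delta\,\ud M \,|\, \H_s}$, normalised by $\esp{\langle M\rangle_t - \langle M\rangle_s \,|\, \H_s}$.

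For the direction $(\Leftarrow)$, suppose $C := C(\Psi)$ is an $\bF$-martingale weakly orthogonal to $M$. Arguing as in Remark \ref{rem:orth}, for every $A \in \H_s$ the process $\I_A\,\delta\,\I_{(s,t]}$ belongs to $\mathcal M_\H^2(0,T)$; combining weak orthogonality at time $T$ with the $\F_s$-measurability of $C_s$ and the $\bF$-martingale property of $\int \delta\,\ud M$, one obtains $\esp{(C_T - C_s)\int_s^t \delta\,\ud M \,|\, \H_s} = 0$. Since the quadratic term is non-negative and the drift contributions are of lower order, the $\liminf$ of the ratio is non-negative and $\Psi$ is $(\bH,\bF)$-locally risk-minimizing.

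For the direction $(\Rightarrow)$, applying the local optimality condition successively to $\delta$ and $-\delta$ forces the linear-in-$\delta$ part of the ratio to vanish; the arbitrariness of $\delta \in \mathcal M_\H^2(0,T)$ together with the strict increase of $\langle M\rangle$ then yields weak orthogonality of $C$ to $M$. To derive mean-self-financing I will compare $\Psi$ with $\tilde\Psi = (\theta, \tilde\eta)$, where $\tilde\eta_t := \mathbb E[C_T(\Psi) | \F_t] - \theta_t S_t + \int_0^t \theta_u\,\ud S_u$: this preserves $\theta$ and the replication constraint $V_T(\tilde\Psi) = \xi$, while its cost process is the $\bF$-martingale $\tilde C_t = \mathbb E[C_T | \F_t]$ with $\tilde C_T = C_T$. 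A conditional bias--variance decomposition then gives $R_s^\H(\Psi) = R_s^\H(\tilde\Psi) + \esp{(\mathbb E[C_T | \F_s] - C_s)^2 \,|\, \H_s}$, so $\Psi$ can be strictly improved unless $C$ is already an $\bF$-martingale. The main obstacle is precisely this last step: tying the (essentially global) improvement provided by $\tilde\Psi$ back into the infinitesimal optimality framework of Definition \ref{def:lrm} requires either checking that the $\eta$-modification lies within the class of admissible perturbations, or localising the argument to exploit the pointwise-in-$s$ LRM condition directly.
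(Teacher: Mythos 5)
Your overall strategy coincides with the paper's (a Schweizer-type local perturbation analysis of the $\bH$-risk, splitting $\ud S=\ud M+\alpha\,\ud\langle M\rangle$ and showing via Assumptions \ref{ass:bracket} and \ref{ass:alpha} that the drift contributions are of lower order than $\langle M\rangle_t-\langle M\rangle_s$), and the $(\Leftarrow)$ direction is essentially correct: the $\I_A$-localization of the weak orthogonality condition, as in Remark \ref{rem:orth}, does kill the cross term. However, there is a genuine error at the start: the claim that ``the $\epsilon$-part cancels in the terminal increment'' is false. For the perturbation $\Delta|_{(s,t]}=(\delta\I_{(s,t]},\gamma\I_{[s,t)})$ one has $V_s(\Psi+\Delta|_{(s,t]})=V_s(\Psi)+\gamma_s$, so the perturbed cost increment is $(C_T-C_s)-\int_s^t\delta\,\ud S-\gamma_s$, not $(C_T-C_s)-\int_s^t\delta\,\ud S$. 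The paper's identity \eqref{eq:riskHH} retains exactly this contribution as the non-negative square $\bigl(\gamma_{t_i}+\condesphti{\int_{t_i}^{t_{i+1}}\delta_u\alpha_u\ud \langle M\rangle_u}\bigr)^2$ divided by $\condesphti{\langle M\rangle_{t_{i+1}}-\langle M\rangle_{t_i}}$. Dropping it is harmless for sufficiency but fatal for necessity, because perturbations with $\delta=0$ and $\gamma\neq 0$ are precisely the ones that detect failure of the martingale property of $C(\Psi)$.

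This connects to the gap you yourself flag: the mean-self-financing half of $(\Rightarrow)$ is not proved. The comparison with the globally modified strategy $\tilde\Psi$ does not live inside Definition \ref{def:lrm}, which only quantifies over small perturbations on $(t_i,t_{i+1}]$ vanishing at $T$; the paper instead closes this step by adapting Lemma 2.1 of Schweizer's 1993 paper, i.e.\ by perturbing only the $\eta$-component locally and using the strict increase of $\langle M\rangle$ (whence $\condesphti{\langle M\rangle_{t_{i+1}}-\langle M\rangle_{t_i}}>0$) to force $C$ to be an $\bF$-martingale. Two further points for the orthogonality half of $(\Rightarrow)$: applying the optimality condition to $\delta$ and $-\delta$ does not by itself annihilate the linear term because of the quadratic $\delta^2$; one must first identify $\lim_n r_\H^{\tau_n}(\theta,\delta)=\delta^2-2\delta\mu^\H$, where $\mu^\H$ is the integrand in the Galtchouk--Kunita--Watanabe decomposition of $C_T(\theta)$ under partial information (Proposition \ref{prop:GKW}), and then take $\delta=\epsilon\cdot\mathrm{sign}(\mu^\H)$ and let $\epsilon\to 0$. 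Your sketch never introduces this decomposition, yet it is what converts the interval-wise conditional statements into the $(\P\otimes\langle M\rangle)$-a.e.\ identity $\mu^\H=0$, which is the precise meaning of weak orthogonality of $C(\Psi)$ to $M$ here.
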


\begin{proof}
For the proof, we refer to Section \ref{sec:tech} in Appendix.
\end{proof}

\noindent The previous result motivates the following.
\begin{definition}
Let  $\xi \in L^2(\Omega,\F_T,\P;\R)$ be a contingent claim. An $(\bH,\bF)$-admissible strategy $\Psi$ with $V_T(\psi)=\xi$ $\P$-a.s. is called $(\bH,\bF)$-{\em optimal} for $\xi$ if $\Psi$ is mean-self-financing and the $\bF$-martingale $C(\Psi)$ is weakly orthogonal to $M$. 
\end{definition} 

\noindent The next result ensures that the existence of an $(\bH,\bF)$-{\em optimal} strategy is equivalent to the decomposition \eqref{F-Sweak} of the contingent claim $\xi$. In the case of full information, an analogous result can be found in~\cite{fs}. 

\begin{proposition} \label{Claudia}
%Suppose that $\langle M \rangle$ is $\P$-a.s. strictly increasing.
 A contingent claim $\xi \in L^2(\Omega,\F_T,\P;\R)$ admits an $(\bH,\bF)$-optimal strategy $\Psi=(\theta,\eta)$ with $V_T(\Psi)=\xi$ $\P$-a.s. if and only if $\xi$ can be written as
\begin{equation} \label{F-Sweak1}
\xi = U_0 + \int_0^T \beta^\H_t  \ud S_t + A_T, \quad \P-\mbox{a.s.},
\end{equation}
with $U_0\in L^2(\Omega,\F_0,\P;\R)$, $\beta^\H \in \mathcal M_\H^2(0,T)$ and $A \in \mathcal L_\F^2(0,T)$
weakly orthogonal to $M$. The strategy $\Psi$ is then given by
$$
\theta_t=\beta_t^\H, \quad 0 \leq t \leq T
$$
with minimal cost
$$
C_t(\Psi)= U_0 +A_t, \quad 0 \leq t \leq T.
$$
If \eqref{F-Sweak1} holds, the optimal portfolio value is
$$
V_t(\Psi)=C_t(\Psi) + \int_0^t\theta_s\ud S_s= U_0 + \int_0^t\beta_s^\H\ud S_s + A_t,  \quad 0 \leq t \leq T
$$
and
$$
\eta_t=V_t(\Psi)-\beta_t^\H S_t, \quad  0 \leq t \leq T.
$$
\end{proposition}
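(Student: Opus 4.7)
The plan is to prove both implications by directly identifying the components of the decomposition with the components of the strategy, and relying on the definition of $(\bH,\bF)$-optimal (namely, mean-self-financing with cost process $C(\Psi)$ being a square-integrable $\bF$-martingale weakly orthogonal to $M$). No BSDE machinery is needed at this point; only algebraic manipulations and checks of integrability and measurability.

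For the sufficiency $(\Leftarrow)$, assume the decomposition \eqref{F-Sweak1} holds. I would set $\theta_t := \beta_t^\H$, define the value process
\begin{equation*}
V_t(\Psi) := U_0 + \int_0^t \beta_s^\H \ud S_s + A_t, \quad 0 \leq t \leq T,
\end{equation*}
and put $\eta_t := V_t(\Psi) - \theta_t S_t$. Then $V_T(\Psi) = \xi$ $\P$-a.s. by \eqref{F-Sweak1}. Since $\beta^\H \in \mathcal M_\H^2(0,T)$ and $S$ satisfies (SC) with $\alpha$ bounded, the stochastic integral $\int_0^{\cdot}\beta_s^\H \ud S_s$ is a square-integrable $\bF$-semimartingale (the martingale part is in $L^2$ by definition of $\mathcal M^2$, the bounded-variation part is controlled by Cauchy--Schwarz together with Assumption~\ref{ass:alpha}); combined with $A \in \mathcal L_\F^2(0,T)$ and $U_0 \in L^2(\F_0)$, this yields $V_t(\Psi) \in L^2(\F_t)$ for every $t$, so $\Psi$ is $(\bH,\bF)$-admissible. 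The cost process is $C_t(\Psi) = V_t(\Psi) - \int_0^t\theta_s \ud S_s = U_0 + A_t$, which is an $\bF$-martingale (mean-self-financing). Moreover $C_T(\Psi) - C_0(\Psi) = A_T$, so weak orthogonality of $C(\Psi)$ to $M$ reduces to weak orthogonality of $A$ to $M$, which is the hypothesis. Thus $\Psi$ is $(\bH,\bF)$-optimal.

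For the necessity $(\Rightarrow)$, assume $\Psi = (\theta,\eta)$ is $(\bH,\bF)$-optimal with $V_T(\Psi) = \xi$. I would set
\begin{equation*}
U_0 := C_0(\Psi) = V_0(\Psi), \qquad \beta_t^\H := \theta_t, \qquad A_t := C_t(\Psi) - C_0(\Psi),
\end{equation*}
so that $U_0 \in L^2(\F_0)$ and $\beta^\H \in \mathcal M_\H^2(0,T)$ by admissibility. Since $C(\Psi)$ is an $\bF$-martingale with $C_T(\Psi) = \xi - \int_0^T \theta_s \ud S_s \in L^2$ (using $\xi \in L^2$, $\theta \in \mathcal M_\H^2$ and Assumption~\ref{ass:alpha} as in the previous paragraph), we deduce $A \in \mathcal L_\F^2(0,T)$. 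Weak orthogonality of $A$ to $M$ is inherited from that of $C(\Psi)$. Evaluating $V_T(\Psi) = C_T(\Psi) + \int_0^T\theta_s \ud S_s$ rewrites as the decomposition \eqref{F-Sweak1}. The explicit expressions for $\theta$, $C_t(\Psi)$, $V_t(\Psi)$ and $\eta_t$ then follow by definition.

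The only non-routine point is the square-integrability of the cost process in the necessity direction, i.e., checking that $\int_0^T \theta_s \ud S_s$ is genuinely in $L^2$ so that $A$ lies in $\mathcal L_\F^2(0,T)$; but this is precisely what Assumption~\ref{ass:alpha} (boundedness of $\alpha$) together with the admissibility requirement $\theta \in \mathcal M_\H^2(0,T)$ is designed to give, via the chain of estimates already displayed before Definition of $(\bH,\bF)$-admissible strategy. All other steps are immediate identifications.
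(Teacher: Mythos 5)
Your proposal is correct and follows essentially the same route as the paper: both directions proceed by the direct identifications $\theta=\beta^\H$, $A=C(\Psi)-C_0(\Psi)$, $U_0=C_0(\Psi)$, using only the definition of $(\bH,\bF)$-optimality (mean-self-financing plus weak orthogonality of the cost) and no BSDE machinery. The extra integrability and admissibility checks you supply (square-integrability of $\int_0^\cdot\theta_s\,\ud S_s$ via Assumption~\ref{ass:alpha} and $\theta\in\mathcal M_\H^2(0,T)$) are details the paper leaves implicit, so they strengthen rather than diverge from its argument.
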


\begin{proof}
Suppose that $\Psi$ is an $(\bH,\bF)$-optimal strategy with $V_T(\Psi)=\xi$ $\P$-a.s.. Then, the replication constraint yields
\begin{equation} \label{eq:FSdecomposition}
\xi=V_T(\Psi)=C_T(\Psi) + \int_0^T\theta_s\ud S_s=C_0(\Psi)+ \int_0^T\theta_s\ud S_s + \left(C_T(\Psi) - C_0(\Psi)\right), \quad \P-{\rm a.s.}.
\end{equation}
Since $\Psi$ is an $(\bH,\bF)$-optimal strategy, by Proposition \ref{prop:characterization} we know that the process $C(\Psi) - C_0(\Psi)$ is a square-integrable $\bF$-martingale weakly orthogonal to $M$ that is in addition null at zero. Hence \eqref{eq:FSdecomposition} is indeed the F\"ollmer-Schweizer decomposition of $\xi$ with respect to $S$ under partial information with $\beta^\H=\theta$ and $A=C(\Psi) - C_0(\Psi)$.\\
We now assume that \eqref{F-Sweak1} holds. Then, we choose 
\begin{align*}
\theta_t & = \beta_t^\H, \quad t \in [0,T],\\
\eta_t & = U_0 + A_t  - \beta_t^\H S_t - \int_0^t \beta_s^\H \ud S_s, \quad t \in [0,T]. 
\end{align*}
Thus, the strategy $\Psi=(\beta^\H, \eta)$ is such that the associated cost is given by
$$
C_t(\Psi) = V_t(\Psi) - \int_0^t \beta_s^\H \ud S_s = U_0 + A_t,
$$
for every $t \in [0,T]$. In particular, $C_T(\Psi)=U_0 + A_T$.
Hence $C(\Psi)$ is an $\bF$-martingale weakly orthogonal to $M$ and this implies that $\Psi$ is an $(\bH,\bF)$-optimal strategy.
\end{proof}

\begin{remark}\label{charct}
As a consequence of Proposition \ref{Claudia} and Theorem \ref{th:ex-uniq}, under Assumptions \ref{ass:bracket} and \ref{ass:alpha}, we can characterize, the $(\bH,\bF)$-optimal strategy $\Psi=(\theta,\eta)$, the optimal portfolio value $V(\Psi)$ and the corresponding minimal cost $C(\Psi)$, in terms of the unique solution $(Y, Z, O)$ to the BSDE \eqref{eq:bsde1}  with the particular choice of $f(t,y,z) = - \alpha_t z$; more precisely, $V(\Psi) = Y$, $\theta= Z$ and $C(\Psi) = O + Y_0$. \end{remark}  

By applying Proposition \ref{nuova} (with the particular choice of $f(t,y,z) = - \alpha_t z$) the 
$(\bH,\bF)$-optimal strategy may be expressed in terms of the solution of a problem under full information. 

\begin{proposition} \label{nuovabis}
%Suppose that $\langle M \rangle$ is $\P$-a.s. strictly increasing. 
Let Assumptions \ref{ass:bracket} and \ref{ass:alpha} hold. Let $(\tilde Y,\tilde Z, \tilde O)\in \mathcal S_\F^2(0,T) \times \mathcal M_\F^2(0,T) \times \mathcal L_\F^2(0,T)$ be a solution to the problem under complete information
\begin{equation} \label{eq:full1}
\tilde Y_t = \xi - \int_t^T  \hat Z_s \alpha_s \ud \langle M\rangle_s - \int_t^T \tilde Z_s \ud M_s - (\tilde O_T - \tilde O_t), \quad 0 \leq t \leq T,
\end{equation}
where $\tilde O$ is strongly orthogonal to $M$ and 
$$ 
\hat Z_t:=  \frac{\ud L^\bH_t}{\ud \langle M\rangle^\bH_t} \quad L_t:=\int_0^t \tilde Z_s \ud \langle M \rangle_s, \quad 0 \leq t \leq T.
$$
Then  the $(\bH,\bF)$-optimal strategy $\Psi=(\beta^\H,\eta)$,  the optimal portfolio value and the minimal cost are given by
$$
 \beta_t^\H = \hat Z_t, %:=  \frac{\ud L^\bH_t}{\ud \langle M\rangle^\bH_t},
 \quad  V_t(\Psi) = \tilde Y_t, \quad C_t(\Psi) =  \tilde Y_0 + \tilde O_t +
 \int_0^t ( \tilde Z_s -  \hat Z_s)   \ud M_s \quad  \forall t \in [0,T],
 $$
 respectively.
\end{proposition}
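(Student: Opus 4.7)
The plan is to specialize Proposition \ref{nuova} to the driver $f(t,y,z) = -\alpha_t z$, which is precisely the driver that recasts the local risk-minimization problem under restricted information as the BSDE \eqref{eq:bsdeFS}, and then to read off the optimal strategy, value process, and cost process using Remark \ref{charct}.

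First, I would check that Assumption \ref{ass:driver} is satisfied for $f(t,y,z) = -\alpha_t z$: by Assumption \ref{ass:alpha}, $f$ is $\bar K$-Lipschitz in $(y,z)$, and since $f(t,0,0) = 0$, the integrability condition of Assumption \ref{ass:driver}(ii) is automatic. Together with Assumption \ref{ass:bracket}, this places us within the scope of Theorem \ref{th:ex-uniq}, so there exists a unique triplet $(Y,Z,O) \in \mathcal S_\F^2(0,T) \times \mathcal M_\H^2(0,T) \times \mathcal L_\F^2(0,T)$ solving the partial-information BSDE \eqref{eq:bsdeFS}.

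Second, I would observe that with this choice of $f$, the generic full-information BSDE \eqref{eq:full} of Proposition \ref{nuova} reduces exactly to equation \eqref{eq:full1} of the present statement. Hence, by Proposition \ref{nuova}, the triplet $(\tilde Y, \hat Z, \tilde O + B)$, with $B_t = \int_0^t (\tilde Z_s - \hat Z_s)\ud M_s$ a square-integrable $\bF$-martingale weakly orthogonal to $M$, is a solution of \eqref{eq:bsdeFS}. The uniqueness provided by the previous step then forces
$$
Y = \tilde Y, \qquad Z = \hat Z, \qquad O = \tilde O + B.
$$

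Third, I would invoke Remark \ref{charct}, which identifies the $(\bH,\bF)$-optimal strategy entirely from the triplet $(Y,Z,O)$ via $\beta^\H = Z$, $V(\Psi) = Y$, and $C(\Psi) = O + Y_0$. Substituting the identifications from the previous step yields the claimed formulas
$$
\beta_t^\H = \hat Z_t, \qquad V_t(\Psi) = \tilde Y_t, \qquad C_t(\Psi) = \tilde Y_0 + \tilde O_t + \int_0^t (\tilde Z_s - \hat Z_s)\ud M_s.
$$
I do not expect a significant obstacle: the statement is essentially a bookkeeping corollary obtained by chaining Proposition \ref{nuova} with Remark \ref{charct}. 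The one detail worth flagging is the well-definedness of $\hat Z$, i.e.\ the absolute continuity of $L^\bH$ with respect to $\langle M\rangle^\bH$; this, however, is already guaranteed inside Proposition \ref{nuova} (via Proposition 4.8 of~\cite{ccr}), so no new verification is required here.
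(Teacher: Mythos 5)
Your proposal is correct and follows essentially the same route as the paper's own proof: specialize Proposition \ref{nuova} to the driver $f(t,y,z)=-\alpha_t z$, invoke uniqueness of the solution to the partial-information BSDE (via Theorem \ref{th:ex-uniq}), and conclude with the identification in Remark \ref{charct}. The only difference is that you spell out the verification of Assumption \ref{ass:driver} for this driver, which the paper leaves implicit.
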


\begin{proof}
By Proposition \ref{nuova} we get the 
the triplet
$$
(Y,Z,O)=\left(\tilde Y, \hat Z, \tilde O + B\right),
$$
where $B= \int( \tilde Z_s -  \hat Z_s)   \ud M_s $ is a square-integrable $\bF$-martingale weakly orthogonal to $M$, 
is a solution to the BSDE \eqref{eq:bsde1} under partial information with the particular choice of $f(t,y,z) = - \alpha_t z$.

Finally, by uniqueness of the solution to this equation   and Remark \ref{charct} the thesis follows.\end{proof}

\begin{remark}
Let us observe that the process $\tilde Z$ coincides with the optimal strategy under full information, $\beta$, only in the particular case where $S$ is an $\bF$-martingale, i.e. $S = M$ (see \cite{ccr}). In fact, in the semimartingale case, $\beta$ is given by the  second component of the solution to the BSDE under full information with the choice $f(t,y,z) = - \alpha_t z$ which differs from equation (\ref{eq:full1}) that is not a BSDE.
\end{remark}

\subsection{Local risk-minimization under complete information} \label{sec:complete}

Under full information  and in the case where the stock price process $S$ has continuous trajectories, the locally risk-minimizing strategy can be computed via the Galtchouk-Kunita-Watanabe decompositon of the contingent claim with respect to the minimal martingale measure (in short MMM) $\P^*$, see e.g. Theorem 3.5 of~\cite{s01}. 
%Schweizer in ~\cite{s01} proved that in the locally risk-minimizing strategy can be computed via the Galtchouk-Kunita-Watanabe decompositon of the contingent claim with respect to the minimal martingale measure (in short MMM) $\P^*$. 
This is a consequence of the fact that the MMM preserves orthogonality, which means that any $(\P,\bF)$-martingale strongly orthogonal to the martingale part of $S$ under $\P$ turns out to be a $(\P^*, \bF)$-martingale strongly orthogonal to $S$ under $\P^*$.  We emphasize that this is no longer true in general if $S$ has jumps. However, we are able to characterize the optimal portfolio value in terms of the MMM for $S$ even in presence of jumps. 

Let us recall the definition of the MMM.

\begin{definition}
An equivalent martingale measure $\P^*$ for $S$ %(that is equivalent probability mesure to $\P$ such that $S$ is a martingale under $\P^*$) 
with square-integrable density $\ud \P^*/\ud \P$ is called {\em minimal martingale measure} (for $S$) if $\P^*=\P$ on $\F_0$ and if every $(\P, \bF)$-martingale $\widetilde A$ which is square-integrable and strongly orthogonal to the martingale part of $S$ is also a $(\P^*, \bF)$-martingale. We call $\P^*$ {\em orthogonality-preserving} if $\widetilde A$ is also strongly orthogonal to $S$ under $\P^*$.
\end{definition}
From now on we assume an additional condition on the jump sizes of the martingale part $M$ of $S$ which ensures the existence of the MMM for $S$. More precisely, we make the following assumption:  
\begin{equation}\label{salti}
1-\alpha_t \Delta M_t>0 \quad \P-\mbox{a.s.} \quad \forall t \in[0,T].
\end{equation}
Hence by the Ansel-Strickel Theorem, see~\cite{as}, there exists the minimal martingale measure $\P^*$ for $S$ defined by
\begin{equation}\label{mmm}
\frac{\ud \P^*}{\ud \P}\Big{|}_{\mathcal{F}_t}= \tilde L_t:=\mathcal{E}\left(-\int\alpha_r \ud M_r\right)_t, \quad t \in [0,T],
\end{equation}
where $\mathcal{E}$ denotes the Dol\'{e}ans-Dade exponential. 
Let us observe that by Assumptions \ref{ass:bracket} and \ref{ass:alpha}  the following estimate holds
$$\esp{e^{\int_0^T \alpha^2_t \ud \langle M\rangle_t}} = e^{ \bar K \rho(T)}$$
which implies that the nonnegative $(\P, \bF)$-local martingale $\tilde L$ is in fact a square-integrable $(\P, \bF)$-martingale, see e.g.~\cite{ps}.

\begin{proposition}\label{full}
Let Assumptions \ref{ass:bracket}, \ref{ass:alpha} and equation (\ref{salti}) hold, $\xi \in L^2(\Omega,\F_T,\P;\R)$,  $\bH = \bF$  and assume the $\langle M \rangle$ to be $\P$-a.s. strictly increasing. 
Then there exists the (classical) locally risk-minimizing strategy $\Psi=(\theta,\eta)$ for $\xi$ and the optimal portfolio value $V^\F(\Psi)$ can be computed via the MMM as
$$
V^\F_t(\Psi)= E ^{\P^*} [\xi |  \F_t ] \quad \forall t \in[0,T],
$$
where the notation $E^{\P^*}[\cdot|\F_t]$ denotes the conditional expectation with respect to $\F_t$ computed under $\P^*$. 
\end{proposition}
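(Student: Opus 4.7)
The plan is to combine the F\"ollmer-Schweizer decomposition under partial information (Proposition \ref{th:FS}) with the characterization of optimal strategies (Proposition \ref{Claudia}) in the special case $\bH=\bF$, and then to pass to the minimal martingale measure $\P^*$ by a change of measure. First I would specialize Proposition \ref{th:FS} to $\bH=\bF$: since under the full information flow the weak orthogonality condition on $A$ coincides with the classical strong orthogonality condition (Remark \ref{rem:orth}), $\xi$ admits the (classical) F\"ollmer-Schweizer decomposition $\xi=U_0+\int_0^T\beta_t\,\ud S_t+A_T$ with $U_0\in L^2(\Omega,\F_0,\P;\R)$, $\beta\in\mathcal M^2_\F(0,T)$ and $A\in\mathcal L^2_\F(0,T)$ strongly orthogonal to $M$. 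Proposition \ref{Claudia} applied with $\bH=\bF$ (combined with Proposition \ref{prop:characterization}, whose hypothesis $\langle M\rangle$ strictly increasing is assumed here) then yields the existence of the classical locally risk-minimizing strategy $\Psi=(\beta,V^\F(\Psi)-\beta S)$, with portfolio value
$$
V^\F_t(\Psi)=U_0+\int_0^t\beta_s\,\ud S_s+A_t,\quad t\in[0,T].
$$

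Next I would show that $V^\F(\Psi)$ is a $(\P^*,\bF)$-martingale. The bound $\esp{e^{\int_0^T\alpha_t^2\,\ud\langle M\rangle_t}}\leq e^{\bar K\rho(T)}$ already noted gives $\tilde L_T\in L^2(\P)$, so $\P^*$ is well defined by \eqref{mmm}. By the defining property of the MMM, the square-integrable $\P$-martingale $A$, being strongly orthogonal to the martingale part $M$ of $S$, is a $\P^*$-martingale. Since $\P^*$ is an equivalent martingale measure for $S$, the stochastic integral $\int_0^\cdot\beta_s\,\ud S_s$ is a $\P^*$-local martingale, and therefore so is $V^\F(\Psi)$.

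The key step is to upgrade the $\P^*$-local martingale property of $V^\F(\Psi)$ to a true $\P^*$-martingale property. The plan is to exploit that $V^\F(\Psi)\in\mathcal S^2_\F(0,T)$, which gives $\sup_{t\in[0,T]}|V^\F_t(\Psi)|\in L^2(\P)$, together with Doob's $L^2$ maximal inequality applied to the nonnegative $\P$-martingale $\tilde L$, which yields $\sup_{t\in[0,T]}\tilde L_t\in L^2(\P)$. The Cauchy-Schwarz inequality then gives
$$
\esp{\sup_{t\in[0,T]}|V^\F_t(\Psi)|\,\tilde L_t}<\infty,
$$
so that the family $\{V^\F_\tau(\Psi)\tilde L_\tau\}$, indexed by $\bF$-stopping times $\tau\leq T$, is uniformly integrable under $\P$. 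By Bayes' formula this is equivalent to uniform integrability of $\{V^\F_\tau(\Psi):\tau\leq T\}$ under $\P^*$, which promotes the $\P^*$-local martingale $V^\F(\Psi)$ to a true $\P^*$-martingale. Taking $\P^*$-conditional expectations in the replication identity $V^\F_T(\Psi)=\xi$ finally yields $V^\F_t(\Psi)=E^{\P^*}[\xi\,|\,\F_t]$ for all $t\in[0,T]$.

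The main obstacle I anticipate is precisely this upgrade from $\P^*$-local to $\P^*$-true martingale. In the continuous case one usually invokes orthogonality preservation between $A$ and $S$ under $\P^*$, together with a Galtchouk-Kunita-Watanabe argument directly under $\P^*$; but here, as the authors emphasize, the jumps prevent the MMM from being orthogonality preserving with respect to $S$, so this shortcut is unavailable. The $L^2$-integrability estimates on $V^\F(\Psi)$ (coming from $\mathcal S^2_\F(0,T)$) and on $\tilde L$ (coming from the square-integrability of the density) are exactly what replaces the orthogonality-preserving property in making the local-to-true martingale argument go through.
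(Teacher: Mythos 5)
Your proposal is correct and follows essentially the same route as the paper: full-information F\"ollmer--Schweizer decomposition via Propositions \ref{th:FS}, \ref{prop:characterization} and \ref{Claudia}, the defining property of the MMM to handle the orthogonal part $\widetilde A$, and the $(\P^*,\bF)$-martingale property of $\int\beta\,\ud S$ to conclude $V^\F_t(\Psi)=E^{\P^*}[\xi\,|\,\F_t]$. The only divergence is that the paper obtains the true martingale property of $\int\beta\,\ud S$ under $\P^*$ by citing the proof of Theorem 3.14 in~\cite{fs}, whereas you supply a self-contained local-to-true martingale upgrade via Doob's $L^2$ inequality for $\tilde L$, Cauchy--Schwarz and uniform integrability, which is a valid (and arguably more transparent) substitute.
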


\begin{proof}
First let us observe that $\xi \in L^2(\Omega,\F_T,\P;\R)$ and $\tilde L$ square-integrable $(\P,\bF)$-martingale imply that $\xi \in L^1(\Omega,\F_T,\P^*;\R)$.\\
By Propositions  \ref{th:FS},  \ref{prop:characterization}  and \ref{Claudia} we deduce the existence of the (classical) locally risk-minimizing strategy $\Psi=(\theta,\eta)$. 
Consider the F\"ollmer-Schweizer decomposition of $\xi$ under full information:
\begin{equation} \label{F-S2}
\xi = \widetilde U_0 + \int_0^T \beta_t  \ud S_t + \widetilde A_T, \quad \P-{\rm a.s.},
\end{equation}
where $\widetilde U_0 \in L^2(\Omega,\F_0,\P;\R)$, $\beta$ is an $\bF$-predictable process such that $\esp{\int_0^T \beta_s^2 \ud \langle M\rangle_s} < \infty$ and $\widetilde A \in \mathcal L_\F^2(0,T)$ is strongly orthogonal to $M$.
%Moreover 
Then, $\theta=\beta$ in \eqref{F-S2} and the optimal portfolio value $V^\F(\Psi)$ satisfies for each $t \in [0,T]$

\begin{equation} \label{MGv}
V^\F_t(\Psi)= \widetilde U_0 + \int_0^t \beta_u \ud S_u + \widetilde A_t,
\end{equation}

with $\widetilde A \in \mathcal L_\F^2(0,T)$ strongly orthogonal to $M$.
%For the first equality, since $\theta= \beta^\H$,   it is sufficient to prove that
%for every $\varphi $ $\bH$-predictable and bounded 
%\begin{equation} 	\label{C1}
%E ^{\P^*} [ \int_0^T \varphi_t  \beta_t \langle M\rangle_t]  =  E ^{\P^*}[\int_0^T \varphi_t  \beta^\H_t \langle M\rangle_t].
%\end{equation} By (\ref{mmm}) and Lemma \ref{JS} the left hand side equals $$E ^{\P^*} [ \int_0^T \varphi_t  \beta_t \langle M\rangle_t]  = \esp{ L_T \int_0^T \varphi_t  \beta_t \langle M\rangle_t} =\esp{\int_0^T \varphi_t  L_{s^-} \beta_t \langle M\rangle_t} = \esp{\int_0^T \varphi_t  L_{t^-} {}^p\beta_t \langle M\rangle_t} $$ where the last equality follows because $\varphi,  L_{-}$ and $ \langle M\rangle$ are $\bH$-predictable. On the other hand, since $\beta^\H = {}^p \beta_t$$$\esp{\int_0^T \varphi_t  L_{t^-} {}^p\beta_t \langle M\rangle_t} = \esp{ L_T \int_0^T \varphi_t   {}^p \beta_t \langle M\rangle_t}=E ^{\P^*}[\int_0^T \varphi_t  \beta^\H_t \langle M\rangle_t]$$and this proves (\ref{C1}). 
Since $ \int \beta_r \ud M_r$ and $\tilde L$  are $(\P, \bF)$-square integrable martingales, then $ \int\beta_r \ud S_r$ is a $(\P^*,\bF)$-martingale (see the proof of Theorem 3.14 in~\cite{fs}).  Therefore, the definition of MMM yields that the optimal portfolio value $V^\F(\Psi)$ turns out to be a $(\P^*, \bF)$-martingale and as a consequence, we get 
$$ 
V^\F_t(\Psi) = E ^{\P^*} [ V^\F_T(\Psi) |  \F_t ] = E ^{\P^*} [\xi |  \F_t ], \quad t \in [0,T].
$$

\end{proof}

\begin{remark}
Let us observe that such a result cannot be extended to the partial information framework, since in the F\"ollmer-Schweizer decomposition of $\xi$ under partial information (see equation \eqref{F-Sweak})  the $\bF$-martingale $A$ is only weakly orthogonal to $M$ and so $A$ is not in general a $(\P^*, \bF)$-martingale.
\end{remark}

\begin{remark}
Proposition \ref {full} 
may be useful to compute the locally risk-minimizing strategy under full information, since by (\ref{MGv}), it may be expressed using the predictable covariation under $\P$ of $V^\F(\Psi)$ and $S$, i.e.
$$ 
\beta_t= \frac{\ud \langle V^\F(\Psi), S\rangle^\P}{\ud \langle S \rangle^\P},\quad t \in [0,T].
$$
See \cite{ta}  and references therein for explicit solutions in exponential L\' evy models.
\end{remark}

\appendix

\section{Technical Results} \label{sec:tech}

\noindent Here we clarify the concept of an $(\bH,\bF)$-locally risk-minimizing strategy. As the original version given in the case full information, see e.g.~\cite{s93}, this concept translates the idea that changing an optimal strategy over a small time interval should lead to an increase of risk, at least asymptotically. 

\begin{definition}
A {\em small perturbation} is an $(\bH,\bF)$-admissible strategy $\Delta=(\delta,\gamma)$ such that $\delta$ is bounded, the variation of $\int \delta_u \alpha_u \ud \langle M\rangle_u$ is bounded (uniformly in $t$ and $\omega$) and $\delta_T=\gamma_T=0$. For any subinterval $(s,t]$ of $[0,T]$, we then define the small perturbation
$$
\Delta|_{(s,t]}:=\left(\delta \I_{(s,t]},\gamma\I_{[s,t)}\right).
$$
\end{definition}
\noindent To explain the notion of a local variation of an $(\bH,\bF)$-admissible strategy, we consider partitions $\tau=(t_i)_{0 \leq i \leq N}$ of the interval $[0,T]$. Such partitions will always satisfy
$$
0=t_0 < t_1 < \ldots < t_N=T.
$$
\begin{definition}\label{def:lrm}
For an $(\bH,\bF)$-admissible strategy $\Psi$, a small perturbation $\Delta$ and a partition $\tau$ of $[0,T]$, we set
\begin{equation} \label{def:risklim}
r_\H^\tau(\Psi,\Delta):=\sum_{t_i,t_{i+1} \in \tau}\frac{R_{t_i}^\H\left(\Psi + \Delta|_{(t_i,t_{i+1}]}\right) - R_{t_i}^\H(\Psi)}{\condesphti{\langle M \rangle_{t_{i+1}} - \langle M \rangle_{t_{i}}}}\I_{(t_i,t_{i+1}]}.
\end{equation}
The strategy $\Psi$ is called  $(\bH,\bF)$-locally risk-minimizing if
$$
\liminf_{n \to \infty}r_\H^{\tau_n}(\Psi,\Delta) \ge 0, \quad (\P \otimes \langle M\rangle)-{\rm a.e.}\ {\rm on}\ \Omega \times [0,T]
$$
for every small perturbation $\Delta$ and every increasing sequence $(\tau_n)_{n \in \N}$ of partitions of $[0,T]$ tending to identity. 
\end{definition}

\begin{remark}
If an $(\bH,\bF)$-admissible strategy $\Psi=(\theta,\eta)$ is mean-self-financing, that is $C(\Psi)$ is an $\bF$-martingale, $\Psi$ is uniquely determined by $\theta$. Indeed, since by the replication constraint we have
$$
C_T(\Psi)=V_T(\Psi) - \int_0^T \theta_s \ud S_s = \xi - \int_0^T \theta_s \ud S_s,
$$
then, by the mean-self-financing property, we get
$$
C_t(\Psi)=\condespf{\xi - \int_0^T \theta_s \ud S_s}, \quad t \in [0,T].
$$
Hence we can write $C(\theta):=C(\Psi)$ and $R^\H(\theta):=R^\H(\Psi)$. This justifies the notation
$$
r_\H^\tau(\theta,\delta):=\sum_{t_i,t_{i+1} \in \tau}\frac{R_{t_i}^\H\left(\theta + \delta\I_{(t_i,t_{i+1}]}\right) - R_{t_i}^\H(\theta)}{\condesphti{\langle M \rangle_{t_{i+1}} - \langle M \rangle_{t_{i}}}}\I_{(t_i,t_{i+1}]},
$$
where $\tau$ is a partition of $[0,T]$.
\end{remark}

\noindent We now prove the martingale characterization of $(\bH,\bF)$-locally risk-minimizing strategies. \\

\noindent {\em Proof of Proposition \ref{prop:characterization}.} {\bf Step 1.} By using similar arguments to those used in the proof of Lemma 2.2 of~\cite{s93}, first we show that an $(\bH,\bF)$-admissible strategy $\Psi=(\theta,\eta)$ with $V_T(\Psi)=\xi$ $\P$-a.s. is $(\bH,\bF)$-locally risk-minimizing if and only if $\Psi$ is mean-self-financing and 
\begin{equation} \label{eq:riskH}
\liminf_{n \to \infty}r_\H^{\tau_n}(\theta,\delta) \ge 0, \quad (\P \otimes \langle M\rangle)-{\rm a.e.}\ {\rm on}\ \Omega \times [0,T]
\end{equation}
for every bounded $\bH$-predictable process $\delta$ such that 
the variation of $\int \delta_u \alpha_u \ud \langle M\rangle_u$ is bounded with $\delta_T=0$
and every increasing sequence $(\tau_n)_{n \in \N}$ of partitions of $[0,T]$ tending to identity. \\
\noindent Let $\Psi=(\theta,\eta)$ be an $(\bH,\bF)$-admissible mean-self-financing strategy with $V_T(\Psi)=\xi$ $\P$-a.s. such that condition 
\eqref{eq:riskH} is satisfied.
Now, take a small perturbation $\Delta=(\delta,\gamma)$ and a partition $\tau$ of $[0,T]$. For $t_i,t_{i+1} \in \tau$, we get the following relationship between the $(\bH,\bF)$-admissible (but not necessarily mean-self-financing) strategy $\Psi+\Delta|_{(t_i,t_{i+1}]}$ and the 
$(\bH,\bF)$-admissible mean-self-financing strategy associated to $\theta + \delta|_{(t_i,t_{i+1}]}$:
\begin{equation}\label{eq:riskHH}
%\begin{split}
r_\H^\tau(\Psi,\Delta) = r_\H^{\tau}(\theta,\delta) + \sum_{t_i,t_{i+1} \in \tau} \frac{\left(\gamma_{t_i} + \condesphti{\int_{t_i}^{t_{i+1}}\delta_u\alpha_u\ud \langle M\rangle_u}\right)^2}{\condesphti{\langle M \rangle_{t_{i+1}} - \langle M \rangle_{t_{i}}}}\I_{(t_i,t_{i+1}]}.
%\end{split}
\end{equation}
Then, by \eqref{eq:riskH} it immediately follows that $\Psi$ is $(\bH,\bF)$-locally risk-minimizing.\\
For the converse, let $\Psi$ be an $(\bH,\bF)$-locally risk-minimizing strategy. By adapting Lemma 2.1 of~\cite{s93} to our framework, it is not difficult to show that $\Psi$ is also mean-self-financing. It only remains to prove that condition \eqref{eq:riskH} is fulfilled. 
Let us observe that we may choose all $\gamma_{t_i}$ to be $0$ in \eqref{eq:riskHH}.
By Assumptions \ref{ass:bracket} and \ref{ass:alpha}, the following estimates hold:
\begin{align*}
\sum_{t_i,t_{i+1} \in \tau} &\frac{\left(\condesphti{\int_{t_i}^{t_{i+1}}\delta_u\alpha_u\ud \langle M\rangle_u}\right)^2}{\condesphti{\langle M \rangle_{t_{i+1}} - \langle M \rangle_{t_{i}}}}\I_{(t_i,t_{i+1}]}  \\
& \quad \leq \bar{K}^2 \|\delta\|_\infty \sum_{t_i,t_{i+1} \in \tau}\condesphti{\langle M \rangle_{t_{i+1}} - \langle M \rangle_{t_{i}}}\I_{(t_i,t_{i+1}]}\\
& \quad \leq \bar{K}^2 \|\delta\|_\infty \sum_{t_i,t_{i+1} \in \tau} \rho(t_{i+1} - t_i)\I_{(t_i,t_{i+1}]}.
\end{align*}
It is easy to see that this last expression converges to $0$ $(\P \otimes \langle M\rangle)$-a.e. on $\Omega \times [0,T]$. Hence, \eqref{eq:riskH} is satisfied.

\noindent {\bf Step 2.}
We now consider the $\bF$-martingale $C(\theta)=(C_t(\theta))_{0 \leq t \leq T}$ that represents the cost process associated to an $(\bH,\bF)$-admissible mean-self-financing strategy $\Psi=(\theta,\eta)$. 
Since $C(\theta)$ is square-integrable, we can apply Proposition \ref{prop:GKW}
and get the Galthouk-Kunita-Watanabe decomposition of $C_T(\theta)$ with respect to $M$ under partial information, i.e.
\begin{equation} \label{eq:gkw1}
C_T(\theta)=C_0(\theta) + \int_0^T \mu_u^\H \ud M_u + O_T, \quad \P-{\rm a.s.},
\end{equation}
where $\mu^\H \in \mathcal M_\H^2(0,T)$ and $O \in L_\F^2(0,T)$ is weakly orthogonal to $M$. 
For a partition $\tau$ of $[0,T]$, consider the locally perturbed process associated to the $\bF$-martingale $C(\theta)$:
$$
C_t\left(\theta + \delta \I_{(t_i,t_{i+1}]}\right)=\condespf{C_T(\theta) - \int_{t_{i}}^{t_{i+1}} \delta_u \ud S_u}, \quad 0 \leq t \leq T, \quad 0 \leq i \leq N-1.
$$
We need the following auxiliary result.
\begin{lemma} \label{lem:stat}
Suppose that Assumptions \ref{ass:bracket}  and \ref{ass:alpha} are in force. Then the following statements are equivalent:
\begin{enumerate}
\item $
\liminf_{n \to \infty}r_\H^{\tau_n}(\theta,\delta) \ge 0, \quad (\P \otimes \langle M\rangle)-{\rm a.e.}\ {\rm on}\ \Omega \times [0,T]$, for every bounded $\bH$-predictable process $\delta$ such that 
the variation of $\int \delta_u \alpha_u \ud \langle M\rangle_u$ is bounded with $\delta_T=0$
and every increasing sequence $(\tau_n)_{n \in \N}$ of partitions of $[0,T]$ tending to identity.
\item $\mu^\H =0$, $(\P \otimes \langle M\rangle)-{\rm a.e.}\ {\rm on}\ \Omega \times [0,T]$, where $\mu^H$ is given in \eqref{eq:gkw1}. 
\item $C(\theta)$ is weakly orthogonal to $M$.
\end{enumerate}
\end{lemma}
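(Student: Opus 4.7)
The plan is to exploit the Galtchouk--Kunita--Watanabe decomposition \eqref{eq:gkw1}, namely $C_T(\theta) = C_0(\theta) + \int_0^T \mu^\H_u \ud M_u + O_T$ with $O \in \mathcal L_\F^2(0,T)$ weakly orthogonal to $M$, and to reduce all three conditions to a pointwise statement about $\mu^\H$. The equivalence (2)\,$\Leftrightarrow$\,(3) is immediate: weak orthogonality of $C(\theta)$ to $M$ is equivalent, by subtracting the $\F_0$-measurable $C_0(\theta)$ and the weakly orthogonal part $O$, to weak orthogonality of $\int_0^\cdot \mu^\H_u \ud M_u$; testing this against $\varphi := \mu^\H \I_{\{|\mu^\H|\le n\}} \in \mathcal M_\H^2(0,T)$ and applying the It\^o isometry yields $\esp{\int_0^T (\mu^\H_s)^2 \I_{\{|\mu^\H_s|\le n\}} \ud \langle M\rangle_s}=0$, hence $\mu^\H \equiv 0$ $(\P \otimes \langle M\rangle)$-a.e.; the converse direction is obvious.

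For (1)\,$\Leftrightarrow$\,(2), I would compute the numerator of each summand in \eqref{def:risklim}. Since the strategy is mean-self-financing, $C_T(\theta + \delta \I_{(t_i,t_{i+1}]}) = C_T(\theta) - \int_{t_i}^{t_{i+1}} \delta_s \ud S_s$; writing $\ud S = \ud M + \alpha \ud \langle M\rangle$, inserting \eqref{eq:gkw1}, expanding the square and taking $\condesphti{\cdot}$ produces
\begin{equation*}
R_{t_i}^\H\bigl(\theta + \delta \I_{(t_i,t_{i+1}]}\bigr) - R_{t_i}^\H(\theta) = \condesphti{\int_{t_i}^{t_{i+1}} \delta_s \bigl(\delta_s - 2\mu^\H_s\bigr) \ud \langle M\rangle_s} + \varepsilon_i,
\end{equation*}
where $\varepsilon_i$ collects all cross terms. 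The cross term between $O_T - O_{t_i}$ and $\int_{t_i}^{t_{i+1}}\delta_s \ud M_s$ vanishes by applying the weak orthogonality of $O$ to the $\bH$-predictable test integrand $\delta \I_{(t_i,t_{i+1}]} Z$ for arbitrary bounded $\H_{t_i}$-measurable $Z$ (together with the martingale property of $O$); the cross term involving $\int_{t_{i+1}}^T \mu^\H_u \ud M_u$ is killed by conditioning on $\F_{t_{i+1}}$; and all remaining contributions factor through the finite-variation correction
\[
A_i := \int_{t_i}^{t_{i+1}} \delta_s \alpha_s \ud \langle M\rangle_s - \condespfti{\int_{t_i}^{t_{i+1}} \delta_s \alpha_s \ud \langle M\rangle_s},
\]
whose pathwise bound $|A_i| \le 2 \bar K \|\delta\|_\infty \rho(t_{i+1}-t_i)$ from Assumptions \ref{ass:alpha} and \ref{ass:bracket}, combined with Cauchy--Schwarz and the It\^o isometry, yields $\varepsilon_i / \condesphti{\langle M \rangle_{t_{i+1}} - \langle M \rangle_{t_{i}}} \to 0$ as the mesh vanishes.

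Along any increasing sequence $(\tau_n)$ of partitions tending to identity, a Lebesgue-type differentiation result for $\bH$-dual predictable projections with respect to $\langle M\rangle$, applied to the absolutely continuous process $\int_\cdot \delta_s(\delta_s - 2\mu^\H_s)\ud\langle M\rangle_s$, gives
\[
\liminf_{n\to\infty} r_\H^{\tau_n}(\theta, \delta) = \delta_t\bigl(\delta_t - 2\mu^\H_t\bigr), \quad (\P \otimes \langle M\rangle)\text{-a.e.\ on } \Omega\times[0,T].
\]
If $\mu^\H \equiv 0$, the right-hand side reduces to $\delta_t^2 \ge 0$, giving (2)\,$\Rightarrow$\,(1). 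Conversely, assuming (1), the bounded $\bH$-predictable choice $\delta_t = \mu^\H_t \I_B \I_{\{|\mu^\H_t|\le n\}}$ for an arbitrary $\bH$-predictable set $B$ turns the right-hand side into $-(\mu^\H_t)^2 \I_B \I_{\{|\mu^\H_t|\le n\}} \ge 0$ a.e., forcing $\mu^\H \I_B = 0$ on $\{|\mu^\H|\le n\}$ a.e.\ for every $B$, and then $\mu^\H \equiv 0$ upon sending $n \to \infty$. The main obstacle is the simultaneous justification of the Lebesgue-type differentiation and the vanishing of $\varepsilon_i$ uniformly in the partition: both rest on the mesh control supplied by Assumption \ref{ass:bracket} and on the admissibility constraints on $\delta$ (boundedness of $\delta$ and of the variation of $\int \delta_u \alpha_u \ud\langle M\rangle_u$).
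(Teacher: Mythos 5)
Your proposal is correct and follows essentially the same route as the paper's proof: both reduce (2)$\Leftrightarrow$(3) to the Galtchouk--Kunita--Watanabe decomposition \eqref{eq:gkw1}, both expand the risk increment into the main term $\condesphti{\int_{t_i}^{t_{i+1}}(\delta_u^2-2\delta_u\mu_u^\H)\ud\langle M\rangle_u}$ plus error terms controlled by $A_i$ via Assumptions \ref{ass:bracket} and \ref{ass:alpha} and Cauchy--Schwarz, and both pass to the pointwise limit $\delta^2-2\delta\mu^\H$ before concluding (1)$\Rightarrow$(2) with a suitable truncated test perturbation (your choice $\delta=\mu^\H\I_B\I_{\{|\mu^\H|\le n\}}$ versus the paper's $\epsilon\cdot\mathrm{sign}(\mu^\H)$ times an indicator; both work). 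The only difference is cosmetic: where you invoke a generic Lebesgue-type differentiation along the partitions, the paper cites the corresponding martingale-convergence argument from Schweizer's Proposition 3.1.
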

\begin{proof}
First we show that the limit in (1) exists $(\P \otimes \langle M\rangle)$-a.e. on $\Omega \times [0,T]$ and equals $\delta^2 - 2 \delta \mu^\H$. 
Similarly to the proof of Proposition 3.1 of~\cite{s90}, consider the difference
\begin{equation*}
\begin{split}
& C_T\left(\theta + \delta \I_{(t_i,t_{i+1}]}\right) - C_{t_i}\left(\theta + \delta \I_{(t_i,t_{i+1}]}\right)\\
& = C_T(\theta) - C_{t_i}(\theta) - \int_{t_i}^{t_{i+1}} \delta_u \ud M_u - \left(\int_{t_i}^{t_{i+1}} \delta_u \alpha_u \ud \langle M\rangle_u - \condespfti{\int_{t_i}^{t_{i+1}} \delta_u \alpha_u \ud \langle M\rangle_u}\right).
\end{split}
\end{equation*}
Then by \eqref{eq:gkw1} and Lemma 5.4 of~\cite{ccr}, we have
\begin{align*}
&R_{t_i}^\H\left(\theta + \delta\I_{(t_i,t_{i+1}]}\right) - R_{t_i}^\H(\theta)\\
& = \condesphti{\left(C_T\left(\theta + \delta \I_{(t_i,t_{i+1}]}\right) - C_{t_i}\left(\theta + \delta \I_{(t_i,t_{i+1}]}\right)\right)^2} - \condesphti{\left(C_T(\theta) - C_{t_i}(\theta)\right)^2}\\
& = \condesphti{\int_{t_i}^{t_{i+1}}\left(\delta_u^2 - 2\delta_u \mu_u^\H\right)\ud \langle M \rangle_u} + \condesphti{\varfti{\int_{t_i}^{t_{i+1}} \delta_u \alpha_u \ud \langle M\rangle_u}}\\
& \quad \quad + 2 \condesphti{\covfti{\int_{t_i}^{t_{i+1}} \delta_u \ud M_u - (C_{t_{i+1}}(\theta) - C_{t_i}(\theta)), \int_{t_i}^{t_{i+1}} \delta_u \alpha_u \ud \langle M\rangle_u}}.
\end{align*} 
Then, this allows to write the quantity $r_\H^{\tau_n}(\theta,\delta)$ easily as the sum of three terms. By martingale convergence, the term involving the process $\mu^\H$ tends to $\delta^2 - 2\delta \mu^\H$ $(\P \otimes \langle M\rangle)$- a.e. on $\Omega \times [0,T]$, as argued in the proof of Proposition 3.1 of~\cite{s90}. For the second term, we get the following estimate:
\begin{align*}
\sum_{t_i,t_{i+1} \in \tau}&\frac{\condesphti{\varfti{\int_{t_i}^{t_{i+1}} \delta_u \alpha_u \ud \langle M\rangle_u}}}{\condesphti{\langle M \rangle_{t_{i+1}} - \langle M \rangle_{t_{i}}}}\I_{(t_i,t_{i+1}]} \\
& \qquad \leq
\sum_{t_i,t_{i+1} \in \tau} \frac{\condesphti{\condespfti{\left(\int_{t_i}^{t_{i+1}} \delta_u \alpha_u \ud \langle M\rangle_u \right)^2}}}{\condesphti{\langle M \rangle_{t_{i+1}} - \langle M \rangle_{t_{i}}}}\I_{(t_i,t_{i+1}]}\\
& \qquad = \sum_{t_i,t_{i+1} \in \tau} \frac{\condesphti{\left(\int_{t_i}^{t_{i+1}} \delta_u \alpha_u \ud \langle M\rangle_u \right)^2}}{\condesphti{\langle M \rangle_{t_{i+1}} - \langle M \rangle_{t_{i}}}}\I_{(t_i,t_{i+1}]}.
\end{align*}
About the third term, we use the Cauchy-Schwarz inequality for sums and the previous estimate to get
\begin{align*}
& \left|\sum_{t_i,t_{i+1} \in \tau}\frac{\condesphti{\covfti{\int_{t_i}^{t_{i+1}} \delta_u \ud M_u - (C_{t_{i+1}}(\theta) - C_{t_i}(\theta)), \int_{t_i}^{t_{i+1}} \delta_u \alpha_u \ud \langle M\rangle_u}}}{\condesphti{\langle M \rangle_{t_{i+1}} - \langle M \rangle_{t_{i}}}}\I_{(t_i,t_{i+1}]}\right|\\
& \quad = \left|\sum_{t_i,t_{i+1} \in \tau}\frac{\condesphti{\int_{t_i}^{t_{i+1}} \delta_u \ud M_u - \left(C_{t_{i+1}}(\theta) - C_{t_i}(\theta)\right)\int_{t_i}^{t_{i+1}} \delta_u \alpha_u \ud \langle M\rangle_u}}{\condesphti{\langle M \rangle_{t_{i+1}} - \langle M \rangle_{t_{i}}}}\I_{(t_i,t_{i+1}]}\right|\\
& \quad \quad \leq \left(\sum_{t_i,t_{i+1} \in \tau} \frac{\condesphti{\left(\int_{t_i}^{t_{i+1}} \delta_u \alpha_u \ud \langle M\rangle_u \right)^2}}{\condesphti{\langle M \rangle_{t_{i+1}} - \langle M \rangle_{t_{i}}}}\I_{(t_i,t_{i+1}]}\right)^{\frac{1}{2}}\\
& \quad \quad \qquad \cdot \left(\sum_{t_i,t_{i+1} \in \tau}\frac{\condesphti{\int_{t_i}^{t_{i+1}}\delta_u^2\ud \langle M\rangle_u +\left(\langle C(\theta)\rangle_{t_{i+1}}-\langle C(\theta)\rangle_{t_{i}}\right)}}{\condesphti{\langle M \rangle_{t_{i+1}} - \langle M \rangle_{t_{i}}}}\I_{(t_i,t_{i+1}]}\right)^{\frac{1}{2}}.
\end{align*}
By similar arguments to the ones used in the proof of Proposition 3.1 of~\cite{s90}, it is sufficient to show that
$$
\lim_{n \to \infty}\sum_{t_i,t_{i+1} \in \tau} \frac{\left(\int_{t_i}^{t_{i+1}} \delta_u \alpha_u \ud \langle M\rangle_u \right)^2}{\langle M \rangle_{t_{i+1}} - \langle M \rangle_{t_{i}}}\I_{(t_i,t_{i+1}]}=0, \quad (\P \otimes \langle M\rangle)-{\rm a.e.}\ {\rm on}\ \Omega \times [0,T],
$$
due to Lemma 2.1 of~\cite{s90}. By Assumptions \ref{ass:bracket} and \ref{ass:alpha}, we have
\begin{align*}
 \sum_{t_i,t_{i+1} \in \tau} \frac{\left(\int_{t_i}^{t_{i+1}} \delta_u \alpha_u \ud \langle M\rangle_u \right)^2}{\langle M \rangle_{t_{i+1}} - \langle M \rangle_{t_{i}}}\I_{(t_i,t_{i+1}]} & \leq 
\bar{K}^2\|\delta\|_\infty^2 \sum_{t_i,t_{i+1} \in \tau}\left(\langle M \rangle_{t_{i+1}} - \langle M \rangle_{t_{i}}\right)\I_{(t_i,t_{i+1}]}\\
& \leq \bar{K}^2 \|\delta\|_\infty \sum_{t_i,t_{i+1} \in \tau} \rho(t_{i+1} - t_i)\I_{(t_i,t_{i+1}]}
\end{align*}
and the last expression converges to $0$ $(\P \otimes \langle M\rangle)$-a.e. on $\Omega \times [0,T]$.\\
By \eqref{eq:gkw1}, it is easy to check that $C(\theta)$ is weakly orthogonal to $M$ if and only if $\mu^\H =0$ $(\P \otimes \langle M\rangle)$-a.e. on $\Omega \times [0,T]$. It is obvious that (2) implies (1). Since 
$\lim_{n \to \infty}r_\H^{\tau_n}(\theta,\delta)=\delta^2- 2 \delta \mu^\H$ $(\P \otimes \langle M\rangle)-{\rm a.e.}\ {\rm on}\ \Omega \times [0,T]$,
for every bounded $\bH$-predictable process $\delta$ such that 
the variation of $\int \delta_u \alpha_u \ud \langle M\rangle_u$ is bounded with $\delta_T=0$, 
to prove that (1) implies (2), for any $\epsilon > 0$ and $k > 0$ we choose $\delta:=\epsilon \cdot {\rm sign}(\mu^\H) \cdot \I_{\{|\int_0^T \delta_u \alpha_u \ud \langle M \rangle_u| \leq k\}}$. Clearly, $\int \delta_u \alpha_u \ud \langle M\rangle_u$ is bounded and by (1) we deduce that $|\mu^\H|\I_{\{|\int_0^T \delta_u \alpha_u \ud \langle M \rangle_u| \leq k\}} \leq \frac{\epsilon}{2}\I_{\{|\int_0^T \delta_u \alpha_u \ud \langle M \rangle_u| \leq k\}}$, which implies $|\mu^\H|\leq \frac{\epsilon}{2}$ by letting $k \to \infty$.
\end{proof}

\noindent {\bf Step 3.} Finally, by applying Lemma \ref{lem:stat} we obtain the link between condition \eqref{eq:riskH} and the weak orthogonality condition that implies the result. This concludes the proof of Proposition 3.8.
\begin{flushright} 
$\square$
\end{flushright}

 \medskip

\begin{center}
{\bf Acknowledgements}
\end{center}
The third named author was partially supported
by the ANR Project MASTERIE 2010 BLAN 0121 01.

\bibliographystyle{plain}
\bibliography{CCRbibliob}
\end{document}